\newcommand{\PP}{\mathbb{P}}
\newcommand{\kk}{{\bf k}}
\newcommand{\CC}{\mbox{${\mathbb C}$}}
\newcommand{\F}{\mbox{${\mathscr F}$}}
\newcommand{\sL}{\mbox{${\mathscr L}$}}
\newcommand{\sA}{\mbox{${\mathscr A}$}}
\newcommand{\sC}{\mbox{${\mathscr C}$}}
\newcommand{\G}{\mbox{${\mathscr G}$}}
\newcommand{\ZZ}{\mbox{${\mathbb Z}$}}
\newcommand{\E}{\mbox{${\mathscr E}$}}
\newcommand{\M}{\mbox{${\mathscr M}$}}
\newcommand{\tx}{T_X}
\newcommand{\cE}{\mbox{${\mathscr E}$}}
\newcommand{\cF}{\mbox{${\mathscr F}$}}
\newcommand{\End}{\text{\rm End}}
\newcommand{\ox}{\otimes}
\renewcommand{\dim}{\mathrm{dim}\,}
\def\G{\mathcal G}
\def\T{T}
\newtheorem{lema}{Lemma}[section]
\newtheorem{cor}[lema]{Corollary}
\newtheorem{teo}[lema]{Theorem}
\newtheorem{prop}[lema]{Proposition}
\theoremstyle{definition}
\newtheorem{say}[lema]{}
\newtheorem{remark}[lema]{Remark}
\newtheorem{defi}[lema]{Definition}
\newtheorem{exe}[lema]{Example}
\begin{document}

\title[On degeneracy schemes of maps of vector bundles]
{On degeneracy schemes of maps of vector bundles  and applications to holomorphic foliations}

\author{Carolina Araujo}
\address{\noindent Carolina Araujo: IMPA, Estrada Dona Castorina 110, Rio de
  Janeiro, 22460-320, Brazil}
\email{caraujo@impa.br}

\author{Maur\' \i cio Corr\^ea Jr. }
\address{\noindent Maur\' \i cio Corr\^ea Jr: Universidade Federal de Vi\c cosa,
Departamento de Matem\'atica, Avenida P.H. Rolfs, s/n, 36570--000,
Vi\c cosa-MG, Brazil} \email{mauricio.correa@ufv.br}

\thanks{ }
\subjclass{14F05, 32S65, 37F75}
%\keywords{Holomorphic foliations, reflexive sheaves, split vector
%bundles}
\date{}
%\dedicatory{}
%\commby{}

% ----------------------------------------------------------------------

\begin{abstract}
In this paper we provide sufficient conditions for maps of vector bundles on smooth projective varieties
to be uniquely determined by their degeneracy schemes.
We then specialize to holomorphic distributions and foliations.
In particular, we provide sufficient conditions
for foliations of arbitrary rank on $\PP^n$ to be
uniquely determined by their singular schemes.
\end{abstract}

% ----------------------------------------------------------------------
%\begin{center}
\maketitle
%\end{center}
% ----------------------------------------------------------------------

\section{Introduction}
This work was motivated by the following problem:
\begin{center}
{\it{When is a holomorphic foliation uniquely determined by its singular scheme?}}
\end{center}
This problem has its origins in the study of spaces of foliations.
When the general member of a fixed irreducible component of a space of foliations
is determined by its singular scheme, the Hilbert scheme becomes a useful tool for
describing such component. In this case, one  hopes to read off properties of foliations from geometric
aspects of their singular schemes.

A rank $k$ holomorphic foliation $\F\subset T_X$ on a complex projective manifold $X$
gives rise to
a global section $\omega\in H^0\big(X, \bigwedge^kT_X\ox (\bigwedge^k\F)^*\big)$.
The singular scheme of $\F$
is precisely the zero scheme of $\omega$.
The first observation is that
$\F$ can be recovered from the global section $\omega$ (see Remark~\ref{recovering_F}).
So we are reduced to the classical problem of reconstructing sections of
locally free sheaves from their zero schemes.
We refer to \cite{hartshorne} for a general discussion on this topic.
Often one cannot determine a
section of $\bigwedge^kT_X\ox (\bigwedge^k\F)^*$ by its zero scheme.
However, we show that, under suitable conditions, this can be done for
sections arriving from foliations.

The same approach can be used to tackle the following more general problem:
\begin{center}
{\it{When is a map of
vector bundles on a smooth projective variety determined by its degeneracy scheme?}}
\end{center}
Degeneracy schemes of maps of vector bundles  show up in a number of
geometric constructions.
Example~\ref{example:palatini} discusses an instance of this,
which has been classically studied by  Castelnuovo, Palatine and Fano, among others.

We adopt the following notation. Given a locally free sheaf $\cE$ of
rank $e$ on a variety $X$, we denote by $\cE^*$ its dual sheaf, by
$S_i\cE$  its $i$-th symmetric power, and by $\det(\cE)$ the
invertible sheaf $\bigwedge^{e}\cE$. In this paper we prove the
following criterion, which holds for varieties defined over
an arbitrary algebraically closed field.

\begin{teo}\label{main_prop}
Let $X$ be a smooth projective variety, and $\E$ and $\G$ locally free sheaves on $X$
of rank $e$ and $g$, respectively.
Let $\varphi:\E\to \G$ be a generically surjective morphism, denote by
$\omega_{\varphi}\in H^0\big(X, \bigwedge^g(\E^*)\otimes\det(\G)\big)$
the associated global section, and by $Z$
its zero scheme.
Suppose that the following conditions hold.
\begin{itemize}
   \item  $Z$ has pure codimension $e-g+1$, and
   \item for every $i\in \{1,\dots, e-g\}$,
$$
H^i\Big(X, \bigwedge^g(\E^*)\otimes\bigwedge^{g+i}\E\otimes S_i(\G^*)\Big)=0.
$$
\end{itemize}
If $\omega\in H^0\big(X, \bigwedge^g(\E^*)\otimes\det(\G)\big)$ is
such that $\omega|_{Z}=0$, then there is an endomorphism
$\alpha\in \End\Big(\bigwedge^g(\E^*)\Big)$
such that $\omega = \alpha \circ \omega_{\varphi}$.
\end{teo}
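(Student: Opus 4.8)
The plan is to rephrase the conclusion as the surjectivity of a single map on global sections and then to control that map with the Eagon--Northcott complex of $\varphi$. Write $\M:=\bigwedge^g(\E^*)\otimes\det(\G)$, so that $\omega,\omega_\varphi\in H^0(X,\M)$, and let $\I_Z\subset\O_X$ be the ideal sheaf of $Z$; the condition $\omega|_Z=0$ means precisely $\omega\in H^0(X,\I_Z\otimes\M)$. Now $\omega_\varphi=\bigwedge^g\varphi$, viewed as a morphism $\bigwedge^g\E\otimes\det(\G)^{-1}\to\O_X$, is exactly the first differential of the Eagon--Northcott complex, and its image is the ideal of maximal minors, which is $\I_Z$. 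Tensoring the resulting surjection $\bigwedge^g\E\otimes\det(\G)^{-1}\twoheadrightarrow\I_Z$ by the locally free sheaf $\M$ and using $\bigwedge^g\E\otimes\det(\G)^{-1}\otimes\M\cong\mathscr{E}nd(\bigwedge^g\E^*)$, I obtain a surjection of sheaves
\[
\Phi\colon\ \mathscr{E}nd(\bigwedge^g\E^*)\ \twoheadrightarrow\ \I_Z\otimes\M ,
\]
whose effect on global sections is $\alpha\mapsto\alpha\circ\omega_\varphi$. Thus the theorem is equivalent to the surjectivity of $H^0(\Phi)$.

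First I would use the hypothesis that $Z$ has pure codimension $e-g+1$. Since $e-g+1$ is the expected codimension of the degeneracy locus of $\varphi$, the Eagon--Northcott acyclicity criterion shows that the complex
\[
0\to C_{e-g+1}\to\cdots\to C_2\xrightarrow{\,d_2\,} C_1\xrightarrow{\,\omega_\varphi\,}\O_X\to\O_Z\to0,\qquad C_{j+1}=\bigwedge^{g+j}\E\otimes S_j(\G^*)\otimes\det(\G)^{-1},
\]
is exact, i.e.\ a locally free resolution of $\O_Z$. Exactness at $C_1$ gives $0\to\Im(d_2)\to C_1\xrightarrow{\omega_\varphi}\I_Z\to0$; tensoring by the locally free $\M$ identifies $\ker\Phi\cong\Im(d_2)\otimes\M$. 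From the long exact cohomology sequence of $0\to\Im(d_2)\otimes\M\to\mathscr{E}nd(\bigwedge^g\E^*)\xrightarrow{\Phi}\I_Z\otimes\M\to0$, the surjectivity of $H^0(\Phi)$ follows as soon as $H^1(X,\Im(d_2)\otimes\M)=0$.

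The core of the argument is this vanishing, and it is exactly here that the cohomological hypotheses enter. Tensoring the whole exact Eagon--Northcott complex by $\M$ gives a locally free resolution of $\Im(d_2)\otimes\M$ by the sheaves $C_j\otimes\M$ with $j\ge2$. Splitting it into short exact sequences $0\to\mathscr K_{j+1}\to C_j\otimes\M\to\mathscr K_j\to0$, where $\mathscr K_j:=\Im(d_j\otimes\mathrm{id})$ and $\mathscr K_2=\Im(d_2)\otimes\M$, and chasing the connecting homomorphisms upward, I find that $H^1(\mathscr K_2)=0$ is implied by the vanishings $H^i(X,C_{i+1}\otimes\M)=0$ for $i=1,\dots,e-g$. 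Cancelling $\det(\G)^{-1}\otimes\det(\G)=\O_X$ gives $C_{i+1}\otimes\M\cong\bigwedge^g(\E^*)\otimes\bigwedge^{g+i}\E\otimes S_i(\G^*)$, so these are precisely the groups assumed to vanish in the theorem. Hence $H^0(\Phi)$ is surjective and every $\omega$ with $\omega|_Z=0$ is of the form $\alpha\circ\omega_\varphi$.

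I expect the main obstacles to be, first, justifying carefully that pure codimension $e-g+1$ suffices for the exactness (acyclicity) of the Eagon--Northcott complex---this is the only place the hypothesis on $Z$ is used, and over an arbitrary field one must be attentive to the precise form of the complex and of its symmetric-power terms---and second, the bookkeeping needed to identify $H^0(\Phi)$ with the composition $\alpha\mapsto\alpha\circ\omega_\varphi$ and to make the indices in the cohomology chase reproduce exactly the assumed vanishing groups. The chase itself is then routine.
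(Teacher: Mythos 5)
Your proposal is correct and follows essentially the same route as the paper: both exploit exactness of the Eagon--Northcott complex of $\varphi$ (granted by the pure codimension hypothesis), tensor it by $\bigwedge^g(\E^*)\otimes\det(\G)$, break the result into short exact sequences, and use the assumed vanishings $H^i\big(X,\bigwedge^g(\E^*)\otimes\bigwedge^{g+i}\E\otimes S_i(\G^*)\big)=0$ in a descending cohomology chase to kill $H^1$ of the first syzygy sheaf, so that $\omega\in H^0(X,\I_Z\otimes\M)$ lifts to an element of $H^0\big(X,\mathscr{E}nd(\bigwedge^g\E^*)\big)$ acting as $\alpha\mapsto\alpha\circ\omega_\varphi$. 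Your reformulation via the surjection $\Phi$ onto $\I_Z\otimes\M$ is just a repackaging of the paper's sequences \eqref{seq3}--\eqref{seq4}, and your index bookkeeping matches the paper's exactly.
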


When $\E$ is fixed, the vanishing assumption of
Theorem~\ref{main_prop} holds provided that $\G$ is sufficiently positive
(see Corollary~ \ref{cor} for a precise statement).
In particular, if
$\F\subset T_X$ is a sufficiently negative
locally free distribution of rank $k$ with singular scheme of pure dimension $k-1$,
and if the vector bundle $\bigwedge^kT_X$ is simple,
then the inclusion $\F\hookrightarrow T_X$ is uniquely determined by its singular scheme.
We remark that a similar approach was used in \cite{C-O-II} to
determine foliations by curves by their singular schemes.

For complex projective spaces,
the problem of determining holomorphic foliations by their singular schemes
has been addressed by several authors.
In \cite{G-MK}, Gomez-Mont and Kempf proved that a foliation by curves $\F$ on $\PP^n$ corresponding to a global section of
$\T_{\PP^n} \otimes \mathcal{O}_{\PP^n}(r)$ is uniquely determined
by its singular scheme $\mathrm{Sing}(\F)$, provided that $r>0$ and $\mathrm{Sing}(\F)$ is reduced.
In \cite{C-O}, Campillo and Olivares showed that when $n=2$ the hypothesis that $\mathrm{Sing}(\F)$ is reduced may be removed.
For $n\geq 3$, they showed in \cite{C-O-II} that $\F$ is uniquely determined
by its singular scheme  provided that $r>0$ and $\mathrm{Sing}(\F)$ is zero-dimensional.

For foliations of rank higher than $1$, much less is known.
Giraldo and Pan-Collantes showed in \cite{GP} that if $\F\subset \T_{\PP^3}$ is a rank $2$ holomorphic
foliation on $\PP^3$ of the form $\F\cong \mathcal{O}_{\PP^n}(a)\oplus \mathcal{O}_{\PP^n}(b)$,
with $a,b\leq -1$, then $\F$ is uniquely determined
by its singular scheme.
Using Theorem~\ref{main_prop} and vanishing results of Manivel,
we  provide sufficient conditions
for distributions of arbitrary rank on $\PP^n$ to be
uniquely determined by their singular schemes,
recovering and generalizing Giraldo and Pan-Collantes' criterion.
(See also Theorems~\ref{main_thm_general}  and
\ref{main_thm}.)

\begin{teo}\label{main_thm_simples}
Let $\F\subset \T_{\PP^n}$ be a holomorphic distribution of rank $k$ on $\PP^n$
with singular scheme of pure dimension $k-1$.
Suppose that one of the following conditions hold:
\begin{enumerate}
    \item $\F$ is locally free and $\F^*\otimes \mathcal{O}_{\PP^n}(k-n)$ is ample, or
    \item $\F$ decomposes as a direct sum of line bundles and $\cF^*\otimes \mathcal{O}_{\PP^n}(k-n+1)$ is ample.
\end{enumerate}
Then $\F$ is uniquely determined by its singular scheme.
\end{teo}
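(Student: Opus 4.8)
The plan is to apply Theorem~\ref{main_prop} with $\E = \Omega^1_{\PP^n}$ (so $e = n$) and $\G = \F^*$ (so $g = k$), taking $\varphi\colon \Omega^1_{\PP^n} \to \F^*$ to be the dual of the inclusion $\F \hookrightarrow \T_{\PP^n}$. Since $\F$ is a distribution, it is generically a subbundle of $\T_{\PP^n}$, so $\varphi$ is generically surjective. Under these identifications $\bigwedge^g(\E^*) \otimes \det(\G) = \bigwedge^k \T_{\PP^n} \otimes (\bigwedge^k \F)^*$, and $\omega_\varphi$ is exactly the section defining $\F$, whose zero scheme is $\mathrm{Sing}(\F)$ (Remark~\ref{recovering_F}); the codimension $e - g + 1 = n - k + 1$ matches the hypothesis that $\mathrm{Sing}(\F)$ has pure dimension $k-1$.

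Granting the cohomological hypotheses of Theorem~\ref{main_prop}, I would then argue as follows. The theorem produces, for any section $\omega$ vanishing on $\mathrm{Sing}(\F)$, an endomorphism $\alpha \in \End(\bigwedge^k \T_{\PP^n})$ with $\omega = \alpha \circ \omega_\varphi$. Now $\bigwedge^k \T_{\PP^n} \cong \Omega^{n-k}_{\PP^n}(n+1)$ is stable, hence simple, so $\alpha$ is a homothety. Thus $\omega$ is a scalar multiple of $\omega_\varphi$ and defines the same distribution, which gives the asserted unique determination.

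The real work is to verify $H^i(\PP^n, \bigwedge^k \T_{\PP^n} \otimes \bigwedge^{k+i}\Omega^1_{\PP^n} \otimes S_i(\F)) = 0$ for $i = 1, \dots, n-k$. First I would dualize by Serre duality and use $\bigwedge^k \T_{\PP^n} \cong \Omega^{n-k}_{\PP^n}(n+1)$ to recast each group as the cohomology of a tensor product of bundles of holomorphic forms with a symmetric power of $\F^*$. In case $(1)$, the hypothesis that $E := \F^* \otimes \mathcal{O}_{\PP^n}(k-n)$ is ample lets me write $S_i(\F^*)$ as $S_i E$ twisted by a line bundle, with $S_i E$ ample; I would then invoke Manivel's vanishing theorem for the tensor product of an ample bundle with exterior powers of $\Omega^1_{\PP^n}$ to kill cohomology in the required degrees. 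The twist by $k-n$ is precisely what places the relevant cohomological bidegree beyond Manivel's vanishing threshold.

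In case $(2)$, the splitting $\F = \bigoplus_j \mathcal{O}_{\PP^n}(a_j)$ makes $S_i(\F)$ a direct sum of line bundles, so the groups reduce to $H^i(\PP^n, \bigwedge^k \T_{\PP^n} \otimes \Omega^{k+i}_{\PP^n} \otimes \mathcal{O}_{\PP^n}(m))$, which are computable by Bott's formula; the extra rigidity permits the weaker positivity $\F^* \otimes \mathcal{O}_{\PP^n}(k-n+1)$ ample, gaining one twist and recovering the Giraldo--Pan-Collantes criterion on $\PP^3$. The main obstacle throughout is controlling the tensor product of two bundles of holomorphic forms together with $S_i(\F)$: it does not fit the hypotheses of elementary vanishing theorems, and matching the precise positivity thresholds ($k-n$ versus $k-n+1$) against Manivel's degree bounds is the delicate point.
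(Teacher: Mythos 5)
Your overall strategy is the same as the paper's: apply Theorem~\ref{main_prop} with $\E=\Omega^1_{\PP^n}$, $\G=\F^*$, verify the cohomological hypotheses via Serre duality and Manivel's vanishing, conclude by simplicity of $\bigwedge^k\T_{\PP^n}$, and recover $\F$ from the section by Remark~\ref{recovering_F}. Your treatment of case (1) is essentially the paper's Proposition~\ref{prop:vanishing}, up to one bookkeeping point: Manivel's result as used there (Proposition~\ref{prop:manivel_1}) needs two \emph{nef} bundles plus an \emph{ample line bundle}, so one writes the Serre dual as $\Omega^k_{\PP^n}\otimes\bigwedge^{k+i}\big(\T_{\PP^n}(-1)\big)\otimes S_i\big(\F^*(k-n-1)\big)\otimes\mathcal{O}_{\PP^n}(r)$ with $r\geq 1$; your variant with $S_i$ of the ample bundle $\F^*(k-n)$ leaves only a trivial twist when $i=1$, but peeling off one $\mathcal{O}_{\PP^n}(1)$ from the ample factor repairs this. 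Your appeal to stability of $\bigwedge^k\T_{\PP^n}\cong\Omega^{n-k}_{\PP^n}(n+1)$ for simplicity is a legitimate alternative (stability of $\Omega^p_{\PP^n}$ is known, though you assert it without proof); the paper instead proves simplicity elementarily via the Euler sequence and Bott (Lemma~\ref{lema}).

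The genuine gap is in case (2). After splitting $S_i(\F)$ into line bundles, the groups to be killed are
$$
H^i\Big(\PP^n, \bigwedge^k\T_{\PP^n}\otimes\Omega^{k+i}_{\PP^n}(m)\Big)\cong H^{n-i}\Big(\PP^n, \Omega^k_{\PP^n}\otimes\bigwedge^{k+i}\T_{\PP^n}(m')\Big),
$$
and since $\bigwedge^k\T_{\PP^n}\cong\Omega^{n-k}_{\PP^n}(n+1)$, these involve a tensor product of \emph{two} bundles of forms. Bott's formulae (\ref{bott}) compute $H^p\big(\Omega^q_{\PP^n}(s)\big)$ for a single form bundle only; they do not compute these groups, so your claim that case (2) "reduces to Bott" is where the argument breaks. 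The numerically delicate subcase is $i=1$: with only $\F^*(k-n+1)$ ample, the factorization above yields an ample twist $r'=(n-k)(i-k')+\cdots$ equal to $0$, so Proposition~\ref{prop:manivel_1} does not apply, and the paper must invoke a second result of Manivel (Proposition~\ref{prop:manivel_2}) to reduce the vanishing of $H^{n-1}\big(\PP^n,\Omega^k_{\PP^n}\otimes\bigwedge^{k+1}\T_{\PP^n}(a-k-1)\big)$, $a\geq 0$, to Bott computations for $\omega_{\PP^n}\otimes\bigwedge^{k+1}\T_{\PP^n}\cong\Omega^{n-k-1}_{\PP^n}$, where the required $H^n$-vanishing holds exactly at the boundary $a=s=0$. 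This missing step is precisely what makes the threshold $k-n+1$ (rather than $k-n$) work in the split case, so the "extra rigidity" you gesture at needs this additional tool, not Bott alone.
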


The assumptions of Theorem~\ref{main_thm_simples} are verified in several important cases
(see Remmark~\ref{kupka_simple}).   
The next example  illustrates the necessity of our hypothesis when $k=n-1$.

\begin{exe}
Fix homogeneous polynomials $F_0, \cdots, F_m\in \CC[x_0, \cdots, x_n]$, with degrees $d_i=\deg(F_i)>0$.
Suppose that the hypersurfaces $\{F_i=0\}$  are smooth and in general position.
Set $\Lambda=\big\{(\lambda_0,\cdots, \lambda_m)\in (\CC^*)^{m+1} \ \big| \ \sum_{i=0}^m\lambda_i d_i=0\big\}$.
For each $\lambda \in \Lambda$, we get a foliation $\F_{\lambda}$ of codimension $1$ on $\PP^n$
defined by the $1$-form
$$
\omega_{\lambda}=\sum_{i=0}^m \lambda_iF_0 \cdots \widehat{F_i} \cdots F_m dF_i.
$$
One can check that $\F_{\lambda}\neq \F_{\nu}$ if $\lambda,\nu\in \Lambda$ are not proportional.

By \cite[Theorem 3]{C-S-V}, $\mathrm{Sing}(\F_{\lambda})$ has pure codimension $2$ if and only if the following condition holds:
\begin{equation}\label{conditions_CSV}
m\leq n \ \ \text{and} \ \  d_i=1 \ \ \text{for all} \ \ i.
\end{equation}
If \eqref{conditions_CSV} does not hold, then $\mathrm{Sing}(\F_{\lambda})$ contains isolated points, and thus
$\F_\lambda$ is not locally free.
If \eqref{conditions_CSV} holds, then $\mathrm{Sing}(\F_{\lambda})= \cup_{i\neq j} \{F_i=F_j=0\}$
for all $\lambda\in \Lambda$.
In this case, $\F_{\lambda}\cong \mathcal{O}_{\PP^n}^{\oplus m-1}\oplus \mathcal{O}_{\PP^n}(1)^{\oplus n-m}$,
and $\F_\lambda^*$ is not ample.
\end{exe}

By  \cite[Theorem 2.3]{J}, a generic distribution $\F$ of
codimension 1 on $\PP^n$ with $\deg(\F)<n-1$ has zero
dimensional singular scheme. On the other hand, by \cite[Theorem
2.6]{J}, if $\F$ is a foliation, then its singular scheme has an
irreducible component of codimension 2. Thus, when $n>2$ and
$k=n-1$, Theorem~\ref{main_thm_simples} does not apply to generic
distributions, but rather to foliations. However, the same methods
allow us to show that generic codimension 1 distributions on $\PP^n$
with negative degree are uniquely determined by their singular
schemes.

\begin{teo}\label{thm:generic_codim1_dist}
Let $\F\subset \T_{\PP^n}$ be a holomorphic distribution of rank
$n-1$ on $\PP^n$ with zero dimensional singular scheme.
If $\deg(\F^*)>0$, then $\F$ is uniquely determined by its
singular scheme.
\end{teo}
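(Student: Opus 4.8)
The plan is to present $\F$ as a twisted $1$-form and to invoke Theorem~\ref{main_prop} in the special case where the target bundle is trivial. First I would translate the hypotheses into this language. Taking $(n-1)$-st exterior powers of the inclusion $\F\hookrightarrow\T_{\PP^n}$ and using $\bigwedge^{n-1}\T_{\PP^n}\cong\Omega^1_{\PP^n}(n+1)$ identifies $\F$ with a twisted $1$-form
\[
\omega_\F\in H^0\big(\PP^n,\Omega^1_{\PP^n}(m)\big),\qquad m:=n+1+\deg(\F^*),
\]
whose zero scheme is $Z:=\mathrm{Sing}(\F)$ and from which $\F$ is recovered as $\ker\big(\omega_\F\colon\T_{\PP^n}\to\O_{\PP^n}(m)\big)$ (see Remark~\ref{recovering_F}). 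As $Z$ is zero-dimensional it has pure codimension $n$, so the assertion amounts to showing that any $\omega\in H^0(\PP^n,\Omega^1_{\PP^n}(m))$ with $\omega|_Z=0$ is proportional to $\omega_\F$.

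I would then apply Theorem~\ref{main_prop} with $X=\PP^n$, $\E=\T_{\PP^n}(-m)$, and $\G=\O_{\PP^n}$, so that $e=n$, $g=1$, $\det(\G)=\O_{\PP^n}$, and the associated section $\omega_\varphi\in H^0\big(\bigwedge^1(\E^*)\otimes\det(\G)\big)=H^0(\Omega^1_{\PP^n}(m))$ is exactly $\omega_\F$. The morphism $\varphi$ is the $1$-form itself, which is generically surjective since $\G$ has rank $1$ and $\omega_\F$ vanishes only on the zero-dimensional set $Z$; and $\codim Z=e-g+1=n$ holds by hypothesis. The theorem then produces $\alpha\in\End\big(\bigwedge^1(\E^*)\big)=\End(\Omega^1_{\PP^n})$ with $\omega=\alpha\circ\omega_\F$. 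Because $\Omega^1_{\PP^n}$ is stable, hence simple, $\alpha$ is a scalar, which gives the desired conclusion.

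The crux, and the step I expect to be the main obstacle, is the vanishing hypothesis of Theorem~\ref{main_prop}. Since $S_i(\G^*)=\O_{\PP^n}$ here, it reads
\[
H^i\big(\PP^n,\Omega^1_{\PP^n}\otimes\textstyle\bigwedge^{i+1}\T_{\PP^n}(-im)\big)=0,\qquad i=1,\dots,n-1.
\]
Rather than decompose the tensor product $\Omega^1_{\PP^n}\otimes\bigwedge^{i+1}\T_{\PP^n}$ into Schur functors and appeal to Manivel's theorem, I would tensor the Euler sequence $0\to\Omega^1_{\PP^n}\to\O_{\PP^n}(-1)^{\oplus(n+1)}\to\O_{\PP^n}\to0$ by $\bigwedge^{i+1}\T_{\PP^n}(-im)$ and use $\bigwedge^{i+1}\T_{\PP^n}\cong\Omega^{n-i-1}_{\PP^n}(n+1)$. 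The long exact cohomology sequence reduces the problem to
\[
H^{i-1}\big(\PP^n,\Omega^{n-i-1}_{\PP^n}(n+1-im)\big)=0\quad\text{and}\quad H^{i}\big(\PP^n,\Omega^{n-i-1}_{\PP^n}(n-im)\big)=0.
\]
Now $\deg(\F^*)>0$ forces $m\ge n+2$, so for every $i\ge1$ both twists $n+1-im$ and $n-im$ are strictly negative; by Bott's formula a negatively twisted bundle of holomorphic $p$-forms on $\PP^n$ has cohomology only in degree $n$, whereas the relevant degrees $i-1$ and $i$ are at most $n-1$. Hence both groups vanish. This is precisely where $\deg(\F^*)>0$ enters: at the boundary value $\deg(\F^*)=0$ the twists reach $0$ and middle-degree cohomology appears (for instance $H^1(\PP^2,\Omega^1_{\PP^2})\ne0$ when $n=2$), so the hypothesis is sharp.
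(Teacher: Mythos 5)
Your proposal is correct and takes essentially the same route as the paper's own proof: both encode $\F$ as a twisted $1$-form $\omega_{\F}\in H^0\big(\PP^n,\Omega^1_{\PP^n}(r)\big)$ with $r=n+1+\deg(\F^*)>n+1$, reduce to Theorem~\ref{main_prop}, verify the vanishing $H^i\big(\PP^n,\Omega^1_{\PP^n}\otimes\bigwedge^{i+1}\T_{\PP^n}(-ir)\big)=0$ for $1\le i\le n-1$ by tensoring the dual Euler sequence with $\bigwedge^{i+1}\T_{\PP^n}(-ir)$ and applying Bott's formulae to the resulting negatively twisted terms, and conclude by the simplicity of $\Omega^1_{\PP^n}$ (Lemma~\ref{lema} in the paper, stability in your version). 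Your normalization $\E=\T_{\PP^n}(-m)$, $\G=\O_{\PP^n}$ instead of $\E=\T_{\PP^n}$, $\G=\O_{\PP^n}(r)$ is only cosmetic, since it produces literally the same cohomology groups.
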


This paper is organized as follows.
In Section~\ref{section:vbs} we address the problem of determining maps
of vector bundles by their degeneracy schemes.
In Section~\ref{section:foliation} we review the notions of distribution, foliation and
Pfaff field.
In Section~\ref{section:proofs} we apply the results of Section~\ref{section:vbs} to
the problem of recovering holomorphic distributions from their singular schemes.

\

\noindent {\bf {Notation.}}
We identify a vector bundle $\cE$ on a variety $X$
with its locally free sheaf of sections.
When $X=\PP^n$ and $m$ is an integer, we denote by $\cE(m)$ the twisted sheaf
$\cE\otimes \mathcal{O}_{\PP^n}(m)$.

\

\noindent {\bf {Acknowledgements.}} 
The first named author was partially supported by CNPq and Faperj Research Fellowships.
We would like to thank Fernando Cukierman for useful comments on an earlier version of this paper.

%%%%%%%%%%%%%%%%%%%%%%%%%%%%%%%%%%%%%%%%%
%
%   SECTION 2
%
%%%%%%%%%%%%%%%%%%%%%%%%%%%%%%%%%%%%%%%%%

\section{Degeneracy scheme of maps of vector bundles}  \label{section:vbs}
In this section  all varieties are defined over a fixed algebraically closed field $\kk$ of arbitrary characteristic.

Let $X$ be a smooth projective variety, $\E$ and $\G$ locally free sheaves on $X$
of rank $e$ and $g$, respectively, and
$\varphi:\E\to \G$ a generically surjective morphism.
The induced map $\wedge^g\varphi: \bigwedge^g\E \to \det(\G)$
corresponds to a global section  $\omega_{\varphi}\in H^0\big(X, \bigwedge^g(\E^*)\otimes\det(\G)\big)$.

%\begin{remark}
%On a dense open subset of $X$, one can reconstruct the map $\varphi:\E\to \G$
%from the section $\omega_{\varphi}\in H^0\big(X, \bigwedge^g(\E^*)\otimes\det(\G)\big)$
%as follows.
%Contraction with $\omega_{\varphi}$ yields a morphism
%$$
%\imath(\omega_{\varphi}):\E^* \to \bigwedge^{g+1}(\E^*)\otimes\det(\G).
%$$
%Denote by $\cK$ the kernel of $\imath(\omega_{\varphi})$.
%One can check that, on a dense open subset of $X$, $\cK$ is locally free of rank $g$, and
%$\varphi$ coincides with the dual map of the
%inclusion $\cK\hookrightarrow \E^*$.
%\end{remark}

\begin{defi}\label{def:deg}
The \emph{degeneracy scheme} $\mathrm{Sing}(\varphi)$ of the map $\varphi:\E\to \G$ is
the zero scheme of the associated global section
$\omega_{\varphi}\in H^0\big(X, \bigwedge^g(\E^*)\otimes\det(\G)\big)$.
\end{defi}

Next we introduce the key tool to prove Theorem~\ref{main_prop}.

\begin{say}[The Eagon-Northcott resolution]\label{EN}
Let the notation be as above, and suppose that
$Z=\mathrm{Sing}(\varphi)$ has pure expected dimension, i.e., $Z$ has pure codimension equal to
$e-g+1$. Then the structure sheaf of $Z$ admits a special resolution, called the
\emph{Eagon-Northcott resolution} (see for instance \cite[A2.6]{Eisenbud}):

$$
0 \to \bigwedge^e \E\otimes S_{e-g}(\G^*) \otimes \det(\G^*)\to
\bigwedge^{e-1} \E\otimes S_{e-g-1}(\G^*) \otimes \det(\G^*) \to
\ldots
$$
$$
\to \bigwedge^{g+1} \E\otimes \G^* \otimes \det(\G^*) \to
\bigwedge^{g} \E \otimes \det(\G^*) \to \mathcal{O}_{X} \to
\mathcal{O}_{Z}\to 0.
$$
\end{say}

\

\begin{proof}[Proof of Theorem~\ref{main_prop}]
Let $\omega_{\varphi}\in H^0\big(X, \bigwedge^g(\E^*)\otimes\det(\G)\big)$
be the  global section associated to the  generically surjective morphism
$\varphi:\E\to \G$, and $Z$ its zero scheme.
The assumption that  $Z$ has pure codimension $e-g+1$ allows us to consider
the Eagon-Northcott resolution of $\mathcal{O}_Z$ as described in \ref{EN}.
Tensorizing it by $\bigwedge^g(\E^*)\otimes \det(\G)$ we get

$$
0 \to \bigwedge^g(\E^*)\otimes \bigwedge^e \E\otimes S_{e-g}(\G^*) \to
\bigwedge^g(\E^*)\otimes \bigwedge^{e-1} \E\otimes S_{e-g-1}(\G^*) \to
\ldots
$$

$$
\to \bigwedge^g(\E^*)\otimes \bigwedge^{g+1} \E\otimes \G^*  \to
\bigwedge^g(\E^*)\otimes \bigwedge^{g} \E  \to \bigwedge^g(\E^*)\otimes \det(\G)\to
\bigwedge^g(\E^*)\otimes \det(\G)\Big|_{Z}\to 0.
$$

\

Setting $\M_i:=\bigwedge^g(\E^*)\otimes \bigwedge^{i} \E\otimes S_{i-g}(\G^*)$
for $g\leq i\leq e$, the above sequence breaks into short exact sequences:

\begin{equation}\label{seq1}
0 \to\M_e \to \M_{e-1}\to \F_{e-g}\to 0,
\end{equation}
\begin{center}
$\vdots$
\end{center}
\begin{equation}\label{seq2}
0 \to \F_{i-g+2} \to \M_i \to \F_{i-g+1} \to 0,
\end{equation}
\begin{center}
$\vdots$
\end{center}
\begin{equation}\label{seq3}
0\to\F_{2}\to\M_g\to \F_{1}\to 0,
\end{equation}
\begin{equation}\label{seq4}
0\to\F_{1}\to \bigwedge^g(\E^*)\otimes \det(\G) \to \Big(\bigwedge^g(\E^*)\otimes \det(\G)\Big)\big|_{Z}\to 0.
\end{equation}

By assumption $\omega\in H^0\big(X, \bigwedge^g(\E^*)\otimes\det(\G)\big)$ is
such that $\omega|_{Z}=0$. It follows from the sequence
\eqref{seq4} that $\omega$ comes from an element in $H^0(X,\F_1)$.
We will show that $H^1(X,\F_2)=0$.
It will then follow from the sequence \eqref{seq3} that there is an
element
$\alpha\in \End\Big(\bigwedge^g(\E^*)\Big) \cong H^0\Big(\bigwedge^g(\E^*)\otimes \bigwedge^{g} \E\Big)$
such that $\omega = \alpha \circ \omega_{\varphi}$.

By assumption
$$
H^{i}\big(X, \M_{g+i} \big)=0
$$
for $1\leq i\leq e-g$.
Applied to the
cohomology of the sequence \eqref{seq2}, these vanishings
yield inclusions
$$
H^i(X,\F_{i+1})\subset H^{i+1}(X,\F_{i+2})
$$
for $1\leq i\leq e-g-2$.
On the other hand, applied the cohomology of  the sequence \eqref{seq1}, they give
$$
H^{e-g-1}(X,\F_{e-g})= 0.
$$
By descending induction we get $H^1(X,\F_2)=0$, concluding the proof.
\end{proof}

\begin{cor}\label{cor}
Let $X$ be a smooth projective variety, $\E$ and $\G$ locally free sheaves on
$X$ of rank $e$ and $g$, respectively, and $\mathcal{L}$ an ample
line bundle on $X$. There exists an integer $r_0$ such that, for every integer
$r\geq r_0$, the following condition holds.

Let $\varphi:\E\to \G\otimes \mathcal{L}^{\otimes r}$ be a
generically surjective morphism whose degeneracy scheme
$Z=\mathrm{Sing}(\varphi)$ has pure codimension $e-g+1$. If
$$\omega\in H^0\big(X, \bigwedge^g(\E^*)\otimes\det(\G)\otimes
\mathcal{L}^{\otimes rg} \big)$$ is such that $\omega|_{Z}=0$, then
there is an endomorphism $\alpha\in \End\Big(\bigwedge^g(\E^*)\Big)$
such that $\omega = \alpha \circ \omega_{\varphi}$.
\end{cor}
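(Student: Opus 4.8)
The plan is to apply Theorem~\ref{main_prop} essentially verbatim, with the target bundle ``$\G$'' of that theorem taken to be $\G\otimes\mathcal{L}^{\otimes r}$, which again has rank $g$. Because $\mathcal{L}$ is invertible, $\det(\G\otimes\mathcal{L}^{\otimes r})\cong\det(\G)\otimes\mathcal{L}^{\otimes rg}$, so both $\omega_{\varphi}$ and the given $\omega$ lie in $H^0\big(X,\bigwedge^g(\E^*)\otimes\det(\G)\otimes\mathcal{L}^{\otimes rg}\big)$, exactly as in the statement. The morphism $\varphi$ is already assumed generically surjective, and its degeneracy scheme $Z$ is granted to have pure codimension $e-g+1$, so the only remaining obligation is to arrange the cohomological vanishing of Theorem~\ref{main_prop} for the twisted target; this is what $r_0$ must secure.

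First I would rewrite that vanishing. From $(\G\otimes\mathcal{L}^{\otimes r})^*\cong\G^*\otimes\mathcal{L}^{\otimes -r}$ and the identity $S_i(\mathcal{F}\otimes\mathcal{M})\cong S_i(\mathcal{F})\otimes\mathcal{M}^{\otimes i}$ for a line bundle $\mathcal{M}$, one obtains $S_i\big((\G\otimes\mathcal{L}^{\otimes r})^*\big)\cong S_i(\G^*)\otimes\mathcal{L}^{\otimes -ri}$. Setting $\mathcal{A}_i:=\bigwedge^g(\E^*)\otimes\bigwedge^{g+i}\E\otimes S_i(\G^*)$, a locally free sheaf \emph{independent} of $r$, the hypotheses of Theorem~\ref{main_prop} become
$$
H^i\big(X,\mathcal{A}_i\otimes\mathcal{L}^{\otimes -ri}\big)=0,\qquad 1\le i\le e-g.
$$

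The main point, and the step I expect to require the most care, is that the twist is by a \emph{negative} power of the ample bundle $\mathcal{L}$, so Serre's vanishing theorem does not apply directly. Instead I would use its dual form: by Serre duality, $H^i(X,\mathcal{A}_i\otimes\mathcal{L}^{\otimes -ri})$ is dual to $H^{n-i}(X,\mathcal{A}_i^{*}\otimes\omega_X\otimes\mathcal{L}^{\otimes ri})$, where $n=\dim X$ and $\omega_X$ is the dualizing sheaf, and Serre vanishing applied to the fixed sheaf $\mathcal{A}_i^{*}\otimes\omega_X$ kills the latter for $ri\gg 0$ \emph{provided} $n-i>0$. Since there are only finitely many indices $i$ and $ri\ge r$, a single $r_0$ (the largest of the resulting Serre bounds) forces all these groups to vanish for $r\ge r_0$, as long as each $i<n$. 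Here the codimension hypothesis enters decisively: $Z$ having pure codimension $e-g+1\le n$ gives $e-g\le n-1$, so every $i\in\{1,\dots,e-g\}$ indeed satisfies $i<n$ (and if $e-g\ge n$ no such $\varphi$ exists, so the claim is vacuous). With these vanishings in hand for $r\ge r_0$, Theorem~\ref{main_prop} applied to $\varphi\colon\E\to\G\otimes\mathcal{L}^{\otimes r}$ produces the endomorphism $\alpha\in\End\big(\bigwedge^g(\E^*)\big)$ with $\omega=\alpha\circ\omega_{\varphi}$. The one delicate interplay to get right is precisely this one, between the negative twist, which confines the dual Serre vanishing to the range $i<n$, and the codimension assumption, which guarantees exactly that range; everything else is a direct specialization of the theorem.
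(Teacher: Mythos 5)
Your proof is correct and follows essentially the same route as the paper's: apply Theorem~\ref{main_prop} to $\varphi\colon\E\to\G\otimes\mathcal{L}^{\otimes r}$, use Serre duality to turn the required vanishing into $H^{\dim(X)-i}$ of a fixed bundle twisted by $\mathcal{L}^{\otimes ri}$, and invoke Serre's vanishing theorem, taking $r_0$ to be the maximum of the finitely many resulting bounds. The only difference is that you make explicit the restriction $i<\dim(X)$ (equivalently $e-g+1\le \dim(X)$) needed for Serre vanishing to apply in positive degree, a point the paper's proof leaves implicit in the purity hypothesis on $Z$.
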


\begin{proof}
By Serre duality,
$
H^i\Big(X, \bigwedge^g(\E^*)\otimes\bigwedge^{g+i}\E\otimes S_i(\G^*)\otimes \mathcal{L}^{*\otimes ri}\Big)
$
is isomorphic to
$H^{\dim(X)-i}\Big(X,\bigwedge^g\E\otimes\bigwedge^{g+i}(\E^*)\otimes S_i\G\otimes \omega_X\otimes \mathcal{L}^{\otimes ri} \Big).
$
It follows from  Serre's vanishing theorem that
there exists an integer $r_0$ such that the latter vanishes
for every integer
$r\geq r_0$ and  every $i\in \{1,\dots, e-g\}$.
So we can apply Theorem~\ref{main_prop}.
\end{proof}

\begin{exe}\label{example:palatini}
Let $m<n$ be positive integers and consider
a \emph{general} injective morphism
$$
\varphi:  \mathcal{O}^{\oplus m}_{\mathbb{P}^n} \to
\Omega^1_{\mathbb{P}^n}(2).
$$
Denote by $Z$ the $(m-1)$-dimensional  degeneracy scheme of $\varphi$, i.e.,
the degeneracy scheme of
the induced map
$T_{\mathbb{P}^n} \to  \mathcal{O}_{\mathbb{P}^n}(2)^{\oplus m}$.
These schemes have been studied by several classical algebraic geometers.
Castelnuovo considered  the case $m = 3$ and $n = 4$  in \cite{castelnuovo}.
In this case $Z$ is the projection in $\PP^4$ of the Veronese surface.
The case $m=3$ and $n=5$ was studied by Palatini in \cite{palatini1}, and then by Fano in \cite{fano}.
In this case $Z$ is a scroll over an elliptic curve.
Palatini considered the case $m=4$ and $n=5$ in \cite{palatini2}.
In this case $Z\subset \PP^5$ is a scroll over a cubic surface in $\PP^3$, known as \emph{Palatini scroll}.
More recently, Hilbert schemes   of these degeneracy schemes were studied by
Faenzi and  Fania in \cite{FF}.

When $m=n-1$, Theorem~\ref{main_prop}  implies that two general injective morphisms
$\phi,\varphi:  \mathcal{O}^{\oplus (n-1)}_{\mathbb{P}^n} \to
\Omega_{\mathbb{P}^n}^1(2)$
have the same degeneracy scheme if and only if
$\phi=\lambda \cdot \varphi$ for some $\lambda\in \kk^*$.
Indeed, by Bott's formulae (see \ref{bott}),
$$
H^1\Big(X,
\Omega^{n-1}_{\mathbb{P}^n}\otimes\bigwedge^{n}T_{\mathbb{P}^n}\otimes
 \mathcal{O}_{\mathbb{P}^n}(-2)^{\oplus (n-1)}\Big)=0,
$$
So Theorem~\ref{main_prop} applies.
Moreover,  $\End(\Omega^{n-1}_{\PP^n})\cong \kk$ (see Lemma~\ref{lema}).
\end{exe}

%%%%%%%%%%%%%%%%%%%%%%%%%%%%%%%%%%%%%%%%%
%
%   SECTION 3
%
%%%%%%%%%%%%%%%%%%%%%%%%%%%%%%%%%%%%%%%%%

\section{Pfaff fields, distributions  and foliations} \label{section:foliation}
In this section $X$ denotes a smooth complex projective variety of dimension $n$.

\begin{defi}
A  \emph{Pfaff field of rank $k$} on $X$ is a nonzero map
$\eta : \Omega^k_X\to \sL$, where  $\sL$ is an
invertible sheaf on $X$.
It corresponds to a global section $\omega_{\eta}\in H^0(X,\bigwedge^k\tx\ox\sL)$.
The \textit{singular scheme} $\mathrm{Sing}(\eta)$ of $\eta$
is the zero scheme of $\omega_{\eta}$.
\end{defi}

\begin{defi}
A \emph{(holomorphic) distribution} of rank $k$ on $X$ is a nonzero coherent subsheaf $\F\subsetneq {T}_X$
of generic rank $k$ which is \emph{saturated}, i.e.,
such that ${T}_X / \F$ is torsion free.

If $\F\subsetneq {T}_X$ is a distribution of rank $k$ on $X$, then $\F$ is reflexive
(see \cite[Remark 2.3]{fano_fol}), and thus $\bigwedge^k \F$ is an invertible sheaf on $X$.
So $\F$ naturaly gives rise to a Pfaff field of rank $k$ on $X$:
$$
\eta_{\F}:\Omega_X^k = \bigwedge^k(T_X^*) \to (\bigwedge^k\F)^*.
$$
The \textit{singular scheme} $\mathrm{Sing}(\F)$ of $\F$ is defined to be
the singular scheme of the associated Pfaff field.

Note that when the distribution $\F$ is locally free, 
$\mathrm{Sing}(\F)$ is precisely the degeneracy scheme of the dual map $\Omega_X^1 \to \F^*$,
as introduced in Definition~\ref{def:deg}.
\end{defi}

\begin{remark}\label{recovering_F}
Let $\F\subsetneq{T}_X$ be a distribution of rank $k$ on $X$, and
$\eta_{\F}:\Omega_X^k \to (\bigwedge^k\F)^*$ the associated Pfaff field.
Then $\F$ may be recovered from $\eta_{\F}$ as follows.
As noted above, $\eta_{\F}$ corresponds to a global section $\omega\in H^0\big(X, \bigwedge^kT_X\ox (\bigwedge^k\F)^*\big)$.
Contraction with $\omega$ yields a morphism
$$
\imath(\omega):\tx \to \bigwedge^{k+1}T_X\ox (\bigwedge^k\F)^*.
$$
Since it comes from a distribution, $\omega$ is locally decomposable.
Therefore, the kernel of $\imath(\omega)$ is a saturated coherent subsheaf of $T_X$
of generic rank $k$.
It is precisely the distribution $\F$.
Here we use the assumption that $\F$ is saturated in $T_X$, and the fact  that two saturated subsheaves of $T_X$
that coincide in a dense open subset  must coincide.  
\end{remark}

\begin{defi}
A distribution on $X$ that is invariant under the Lie Bracket is called an \emph{involutive distribution}.
By abuse of notation, we call such distribution a \emph{(holomorphic) foliation} on $X$. 
\end{defi}

\begin{remark}
Often in the literature, the definition of distribution and foliation  does not require  $\F$ to be saturated in $T_X$.
What we call a foliation is often called a \emph{reduced foliation}.
\end{remark}

\begin{say}[Locally free foliations]\label{kupka_simple}
Let $\F$ be a foliation of rank $k$ on $X$. 
We discuss sufficient conditions  for $\F$ to be locally free.

If $k=1$, then $\F$ is always locally free.
For $k\geq 2$,
we introduce the  \emph{Kupka} condition. 
From the isomorphism $\bigwedge^kT_X\cong \Omega_X^{n-k}\ox \omega_X^*$, we see that 
${\F}$ corresponds to a twisted $(n-k)$-form  $\omega\in H^0\big(X, \Omega_X^{n-k}\ox \omega_X^*\ox (\bigwedge^k\F)^*\big)$.
We say that a point  $P\in \mathrm{Sing}(\F)$ is a \emph{Kupka} singular point if $d  \omega(P)\neq 0$.
Geometrically, this means that, in an analytic neighborhood of $P$, $\F$ is equivalent to the product of a regular (and hence locally free)
foliation and a foliation by curves. 
It follows that $\F$ is locally free at Kupka singular points (see  \cite[Proposition 1.3.1]{medeiros}).

For codimension $1$ foliations, there are other useful conditions.
First suppose that $n=3$ and $k=2$. 
A point  $P\in \mathrm{Sing}(\F)$ is a \emph{generalized Kupka (GK)} singular point if either $d  \omega(P)\neq 0$, or $P$ is an isolated singular point of $d \omega$.
By  \cite[Corollary 2]{calvo_et_al}, $\F$ is locally free at GK singular points. 
Moreover, by \cite[Corollary 1]{calvo_et_al}, the condition of having only GK singularities is open in the space of codimension $1$ foliations on a
smooth projective threefold.
On the other hand, the condition of being locally free   is not always open. 
The notion of GK singularity for codimension $1$ foliations can be generalized to $n\geq 4$ as follows. 
A point  $P\in \mathrm{Sing}(\F)$ is a GK singular point if there exists a threefold $Y\subset X$ smooth at $P$ such that 
either $d  \big(\omega|_Y\big)(P)\neq 0$, or $P$ is an isolated singular point of $d \big(\omega|_Y\big)$ in $Y$.
In this case it is not difficult to see that $\F$ is locally free at $P$.

We also note that the condition of being a direct sum of line bundles is open in the space of codimension $1$ foliations
on $\mathbb{P}^n$ by \cite[Theorem 1]{CP}. 
\end{say}

%%%%%%%%%%%%%%%%%%%%%%%%%%%%%%%%%%%%%%%%%
%
%   SECTION 4
%
%%%%%%%%%%%%%%%%%%%%%%%%%%%%%%%%%%%%%%%%%

\section{Recovering holomorphic distributions from their singular schemes} \label{section:proofs}

%In this section we work over the field $\CC$ of complex numbers.

Let $X$ be a smooth complex projective variety, and
$\F\subsetneq \T_{X}$ a locally free distribution.
In order to recover $\F$ from $\mathrm{Sing}(\F)$, we
want to apply Theorem~\ref{main_prop} to the dual map $\Omega_X^1 \to \F^*$, and then
Remark~\ref{recovering_F}.
So we need to establish the vanishing required in Theorem~\ref{main_prop}.

In \cite{manivel}, Manivel proved a number of vanishing results for
varieties whose tangent bundle is \emph{uniformly nef}.

\begin{defi}[{\cite[p. 405]{manivel}}]
The category of \emph{uniformly nef} vector bundles on smooth complex projective  varieties is the
smallest category $\sC$ containing tensor products of nef line bundles and Hermitian flat bundles,
closed under direct sums, extensions, quotients, and satisfying the following property.
If $f:Y\to X$ is a finite surjective morphism, and $\E$ is a vector bundle on $X$, then
$\E\in \sC$ if and only if $f^*\E\in \sC$.
\end{defi}

The class of smooth complex projective varieties having uniformly nef tangent bundle includes
projective spaces, abelian varieties, products and finite \'etale covers of those,
among others. For those varieties, we have the following result.

\begin{teo}\label{main_thm_general}
Let $X$ be a smooth complex projective variety with uniformly nef tangent bundle,
and $\F\subset \T_{X}$ a locally free  distribution of rank $k$ on $X$.
Suppose that the following conditions hold.
\begin{itemize}
    \item The singular scheme $\mathrm{Sing}(\F)$ is of pure dimension $k-1$.
    \item There exists an ample line bundle $\sA$ on $X$ such that
        $\F^*\otimes \sA^{-1}$ is nef, and $\omega_X\otimes \sA$ is ample.
\end{itemize}
Let $\eta : \Omega^k_X\to (\bigwedge^k\F)^*$ be a Pfaff field of rank $k$ on $X$,
and suppose that
$\mathrm{Sing}(\F)\subset \mathrm{Sing}(\eta)$.
Then there is an endomorphism $\alpha\in \End(\Omega^k_X)$
such that $\eta = \eta_{\F}\circ \alpha$.
\end{teo}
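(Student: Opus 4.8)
The plan is to reduce Theorem~\ref{main_thm_general} to the abstract criterion of Theorem~\ref{main_prop} applied to the dual map $\varphi \: \Omega_X^1 \to \F^*$, whose degeneracy scheme is precisely $\mathrm{Sing}(\F)$ by the remark following the definition of singular scheme. Here $\E = \Omega_X^1$ has rank $e = n$ and $\G = \F^*$ has rank $g = k$, so the expected codimension $e - g + 1 = n - k + 1$ matches the hypothesis that $\mathrm{Sing}(\F)$ has pure dimension $k - 1$, i.e.\ pure codimension $n - k + 1$. The associated section is $\omega_\varphi \in H^0\big(X, \bigwedge^k T_X \otimes \det(\F^*)\big) = H^0\big(X, \bigwedge^k T_X \otimes (\bigwedge^k \F)^*\big)$, which is exactly $\omega_{\eta_\F}$. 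A Pfaff field $\eta$ with $\mathrm{Sing}(\F) \subset \mathrm{Sing}(\eta)$ corresponds to a section $\omega_\eta$ in the same space vanishing on $Z = \mathrm{Sing}(\F)$, so once the hypotheses of Theorem~\ref{main_prop} are checked I obtain $\alpha \in \End\big(\bigwedge^k T_X\big)$ with $\omega_\eta = \alpha \circ \omega_\varphi$; dualizing this endomorphism and unwinding the identification between maps $\Omega_X^k \to (\bigwedge^k\F)^*$ and their sections yields the desired factorization $\eta = \eta_\F \circ \alpha$ for a suitable $\alpha \in \End(\Omega_X^k)$.

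The substantive work is verifying the cohomological vanishing required by Theorem~\ref{main_prop}: for each $i \in \{1, \dots, n-k\}$ I must show
$$
H^i\Big(X, \bigwedge^k(T_X) \otimes \bigwedge^{k+i}(T_X^*) \otimes S_i(\F)\Big) = 0,
$$
where I have substituted $\E^* = T_X$ and $\G^* = \F$ into the statement. This is where Manivel's theory of uniformly nef bundles enters. The strategy is to exhibit the bundle inside the cohomology group as a twist of a uniformly nef bundle by an appropriate ample or positive line bundle, and then invoke Manivel's Le~Potier--type vanishing theorem for uniformly nef tangent bundles.

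The main obstacle I anticipate is organizing the tensor bundle $\bigwedge^k T_X \otimes \bigwedge^{k+i} T_X^* \otimes S_i \F$ into a form where the vanishing theorem applies. The hypotheses provide an ample line bundle $\sA$ with $\F^* \otimes \sA^{-1}$ nef and $\omega_X \otimes \sA$ ample; I would dualize and use Serre duality to rewrite the target group as $H^{n-i}$ of a bundle built from $T_X$, $\Omega_X$, $\F^*$, and $\omega_X$, so that the nefness of $\F^* \otimes \sA^{-1}$ together with the uniform nefness of $T_X$ makes the relevant exterior and symmetric powers uniformly nef after factoring out powers of $\sA$, while the ample twist $\omega_X \otimes \sA$ supplies the strict positivity needed for Manivel's vanishing. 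The delicate points are tracking exactly how many copies of $\sA$ are absorbed by the $S_i \F$ and $\bigwedge^{k+i} T_X^*$ factors, and confirming that the resulting positive twist is ample (or that the negative part lands in a uniformly nef bundle) uniformly over all $i$ in the range. Once the bookkeeping is set up correctly, each vanishing should follow from a single application of Manivel's result; the essential creative step is the choice of how to distribute the ample and nef contributions across the tensor factors.
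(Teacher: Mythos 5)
Your proposal follows the paper's proof essentially verbatim: reduce to Theorem~\ref{main_prop} applied to the dual map $\Omega_X^1 \to \F^*$, then establish the required vanishing via Serre duality and Manivel's Proposition~\ref{prop:manivel_1}, using the splitting $S_i(\F^*)\otimes\omega_X \cong S_i(\F^*\otimes\sA^{-1})\otimes(\sA^{\otimes i}\otimes\omega_X)$. The bookkeeping you flag as delicate is in fact immediate: after Serre duality the exterior factor is $\bigwedge^{k+i}T_X$, which is already nef, so only the symmetric factor absorbs $\sA$-twists, and $\sA^{\otimes i}\otimes\omega_X$ is ample for every $i\geq 1$ because $\omega_X\otimes\sA$ is.
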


When $X=\PP^n$, we have the following more refined result.

\begin{teo}\label{main_thm}
Let $\F\subset \T_{\PP^n}$ be a holomorphic distribution of rank $k$ on $\PP^n$
with singular scheme $\mathrm{Sing}(\F)$ of pure dimension $k-1$.
Suppose that one of the following conditions hold:
\begin{enumerate}
    \item $\F$ is locally free and $\F^*\otimes \mathcal{O}_{\PP^n}(k-n)$ is ample, or
    \item $\F$ decomposes as a direct sum of line bundles and $\cF^*\otimes \mathcal{O}_{\PP^n}(k-n+1)$ is ample.
\end{enumerate}
Let $\eta : \Omega^k_{\PP^n}\to (\bigwedge^k\F)^*$ be a Pfaff field of rank $k$ on $\PP^n$, and suppose that
$\mathrm{Sing}(\F)\subset \mathrm{Sing}(\eta)$.
Then $\eta=\lambda \cdot \eta_{\F}$ for some $\lambda\in \CC^*$.
\end{teo}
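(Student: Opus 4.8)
The plan is to reduce Theorem~\ref{main_thm} to Theorem~\ref{main_thm_general} (or directly to Theorem~\ref{main_prop}) by verifying the relevant hypotheses on $\PP^n$, and then to upgrade the conclusion from ``$\eta=\eta_\F\circ\alpha$ for some endomorphism $\alpha$'' to ``$\eta=\lambda\cdot\eta_\F$ for a scalar $\lambda$''. First I would treat case (1). Since $\T_{\PP^n}$ is uniformly nef, I want to produce an ample line bundle $\sA$ on $\PP^n$ with $\F^*\otimes\sA^{-1}$ nef and $\omega_{\PP^n}\otimes\sA$ ample. Writing $\sA=\Oc_{\PP^n}(a)$ and recalling $\omega_{\PP^n}=\Oc_{\PP^n}(-n-1)$, the second condition demands $a>n+1$, while the first asks $\F^*(-a)$ nef. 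The hypothesis that $\F^*(k-n)$ is ample suggests taking $a$ so that $\F^*(-a)=\bigl(\F^*(k-n)\bigr)\otimes\Oc_{\PP^n}(n-k-a)$ remains nef; I expect the natural choice to be $a=n-k$ adjusted by a positive twist, and the numerics should be arranged precisely so that the ampleness of $\F^*(k-n)$ forces $\F^*(-a)$ to be nef for a suitable $a>n+1$. Once such an $\sA$ exists, Theorem~\ref{main_thm_general} applies verbatim and yields an endomorphism $\alpha\in\End(\Omega^k_{\PP^n})$ with $\eta=\eta_\F\circ\alpha$.

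Next I would handle case (2), where $\F=\bigoplus_{j}\Oc_{\PP^n}(b_j)$ is a direct sum of line bundles and only $\F^*(k-n+1)$ is assumed ample. Here the weaker positivity hypothesis means the nef/ample package for Theorem~\ref{main_thm_general} is not directly available with the same margin, so instead I would apply Theorem~\ref{main_prop} directly to the dual map $\Omega^1_{\PP^n}\to\F^*$, taking $\E=\Omega^1_{\PP^n}$ (rank $e=n$) and $\G=\F^*$ (rank $g=k$). The required vanishing is $H^i\bigl(\PP^n,\bigwedge^k(\T_{\PP^n})\otimes\bigwedge^{k+i}(\Omega^1_{\PP^n})\otimes S_i(\F)\bigr)=0$ for $1\le i\le n-k$. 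Because $\F$ splits as a sum of line bundles, $S_i(\F)$ is a direct sum of line bundles $\Oc_{\PP^n}(m)$, so each cohomology group decomposes into twisted bundles $\bigwedge^k\T_{\PP^n}\otimes\Omega^{k+i}_{\PP^n}(m)$, and I would invoke a vanishing theorem of Manivel (or Bott's formula, as in Example~\ref{example:palatini}) to kill them. The ampleness of $\F^*(k-n+1)$ is precisely the numerical condition that pushes the relevant twists $m$ into the vanishing range; checking that the arithmetic works out across all $i$ and all summands of $S_i(\F)$ is the computational heart of this case.

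The final, and I expect the hardest, step is the passage from an endomorphism $\alpha$ to a scalar. On $\PP^n$ one has $\End(\Omega^k_{\PP^n})\cong\CC$ for every $k$, since $\Omega^k_{\PP^n}$ is simple (stable, hence simple, with respect to the hyperplane polarization); this is the analogue on $\PP^n$ of the computation $\End(\Omega^{n-1}_{\PP^n})\cong\kk$ cited via Lemma~\ref{lema} in Example~\ref{example:palatini}. Thus any $\alpha\in\End(\Omega^k_{\PP^n})$ is multiplication by a scalar $\lambda$, and the equality $\eta=\eta_\F\circ\alpha$ becomes $\eta=\lambda\cdot\eta_\F$. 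The main obstacle is therefore twofold: first, establishing simplicity of $\Omega^k_{\PP^n}$ for all $1\le k\le n-1$ (which I would obtain from stability of the cotangent bundle and its exterior powers, or from Bott vanishing $H^0(\PP^n,\Omega^k_{\PP^n}\otimes\bigwedge^k\T_{\PP^n})$ being one-dimensional); and second, in case (2), verifying that the sharper hypothesis $\F^*(k-n+1)$ ample—rather than $\F^*(k-n)$—is exactly what is needed to secure Manivel's vanishing for split $\F$, so that the gain of one twist is genuinely exploited and not merely assumed.
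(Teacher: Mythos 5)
There is a genuine gap in your treatment of case (1). You propose to deduce it from Theorem~\ref{main_thm_general}, but on $\PP^n$ the hypotheses of that theorem are \emph{strictly stronger} than hypothesis (1). Writing $\sA=\Oc_{\PP^n}(a)$, the condition that $\omega_{\PP^n}\otimes\sA$ be ample forces $a\geq n+2$, while hypothesis (1) only gives ampleness of $\F^*\otimes\Oc_{\PP^n}(k-n)$; since nefness is lost, not gained, by twisting down, no choice of $a$ can work in general --- the ``numerics'' you hope will arrange themselves do not. Concretely, take $k=1$ and $\F\cong\Oc_{\PP^n}(-n)$ (a generic foliation by curves of degree $n+1$, whose singular scheme is zero-dimensional): hypothesis (1) holds because $\F^*(1-n)=\Oc_{\PP^n}(1)$ is ample, yet $\F^*\otimes\sA^{-1}=\Oc_{\PP^n}(n-a)$ is nef only for $a\leq n$, so no admissible $\sA$ exists. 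This is precisely why the paper does not route Theorem~\ref{main_thm} through Theorem~\ref{main_thm_general}: it applies Theorem~\ref{main_prop} directly in both cases, proving the required vanishing in Proposition~\ref{prop:vanishing}, where, after Serre duality, the extra positivity specific to $\PP^n$ is extracted by writing $\bigwedge^i\T_{\PP^n}\cong\bigwedge^i\big(\T_{\PP^n}(-1)\big)\otimes\Oc_{\PP^n}(i)$ (using that $\T_{\PP^n}(-1)$ is nef) and feeding the resulting ample twist $\Oc_{\PP^n}(r)$, $r=(n-k+1)(i-k)+i-n-1\geq 1$, into Manivel's Proposition~\ref{prop:manivel_1}. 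That mechanism, absent from your argument, is what closes the gap of $k+2$ twists between the two statements.

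For case (2) your plan coincides with the paper's in outline (apply Theorem~\ref{main_prop} with $\E=\Omega^1_{\PP^n}$, $\G=\F^*$, split $S_i(\F)$ into line bundles), but the part you defer as ``the computational heart'' contains the one genuinely delicate point. After dualizing, the summands with exterior power $\geq k+2$ do follow from Proposition~\ref{prop:manivel_1} as above; however, for the boundary case $\bigwedge^{k+1}$ the residual twist is only nef, $\Oc_{\PP^n}(a)$ with $a\geq 0$, not ample, so Manivel's Proposition~\ref{prop:manivel_1} does not apply, and the bundles $\bigwedge^k\T_{\PP^n}\otimes\Omega^{k+1}_{\PP^n}(m)$ are not covered by Bott's formulae \ref{bott} alone. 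The paper handles exactly this case with Proposition~\ref{prop:manivel_2}, reducing the vanishing of $H^{n-1}\big(\PP^n,\Omega^k_{\PP^n}\otimes\bigwedge^{k+1}\T_{\PP^n}(a-k-1)\big)$ for all $a\geq 0$ to Bott vanishing for $\omega_{\PP^n}\otimes\bigwedge^{k+1}\T_{\PP^n}(\cdot)\cong\Omega^{n-k-1}_{\PP^n}(\cdot)$. Your final step is correct and matches the paper: simplicity of $\Omega^k_{\PP^n}$ (the paper's Lemma~\ref{lema}, proved via the Euler sequence and Bott's formulae, though stability/homogeneity would also do) turns the endomorphism $\alpha$ into the scalar $\lambda$.
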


The following result of Manivel provides the
vanishing required to apply Theorem~\ref{main_prop} in the context of
Theorems~\ref{main_thm_general} and \ref{main_thm}.

\begin{prop}[{\cite[p. 409]{manivel}}]\label{prop:manivel_1}
Let $X$ be a smooth complex projective variety with uniformly nef tangent bundle.
Let $\sL$ be an ample line bundle on $X$, and
$\cE_1$ and $\cE_2$ be nef vector bundles on $X$ of rank $r_1$ and $r_2$, respectively.
Let $i$ and $j$ be positive integers.
Then
$$
H^p\big(X,\Omega_{X}^k\otimes \bigwedge^{i}\cE_1 \otimes S_{j}(\cE_2) \otimes \sL \big) = 0
$$
for every non-negative integer $k$ and every $p > r_1 + r_2 - i - 1$.
\end{prop}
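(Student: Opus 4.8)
The plan is to reduce Proposition~\ref{prop:manivel_1} to a statement about the cohomology of the building blocks of the uniformly nef category, using the behavior of cohomological vanishing under the operations (direct sums, extensions, quotients, finite surjective pullbacks) that define that category. First I would recall that $\Omega_X^k$ is a quotient of $\bigwedge^k(T_X^*)$ and, more to the point, that $T_X$ uniformly nef forces $\Omega_X^k$ (and its Schur functors) to sit inside a controlled class; the twists $\bigwedge^i \cE_1$ and $S_j(\cE_2)$ by nef bundles preserve nefness of the relevant tensor factors. The core analytic input is a Le Potier--Nakano type vanishing theorem: for a nef vector bundle $\cF$ of rank $r$ and an ample line bundle $\sL$, one has $H^p(X, \Omega_X^{\dim X}\otimes \cF \otimes \sL)=\cdots$ type statements, but the precise shape needed here is Manivel's refinement controlling the range $p > r_1 + r_2 - i - 1$, which already accounts for the loss coming from taking $\bigwedge^i$ and $S_j$. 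So the real content is to package $\Omega_X^k \otimes \bigwedge^i \cE_1 \otimes S_j(\cE_2) \otimes \sL$ as an object to which such a vanishing applies.

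The key steps, in order, are as follows. First I would establish the statement for the generators of the category $\sC$: when $T_X$ is a tensor product of nef line bundles tensored with a Hermitian flat bundle. For these one computes directly, invoking the classical Nakano--Akizuki--Kodaira vanishing and its nef generalization (Demailly--Peternell--Schneider), where ampleness of $\sL$ supplies the strict positivity needed to push $p$ down to the claimed bound. Second I would verify that the class of bundles $\cE$ for which the conclusion
$$
H^p\big(X,\Omega_{X}^k\otimes \bigwedge^{i}\cE_1 \otimes S_{j}(\cE_2) \otimes \sL \big) = 0 \quad \text{for } p > r_1+r_2-i-1
$$
holds (as $\cE_1,\cE_2$ range over the nef bundles built from $\cE$) is stable under the four closure operations. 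Stability under direct sums is immediate from additivity of cohomology; stability under extensions follows from the long exact sequence, since vanishing of the two outer terms in the relevant degrees forces vanishing of the middle; stability under quotients requires analyzing the kernel and applying the induction together with the degree bookkeeping. Third, for the finite surjective morphism $f:Y\to X$, I would use that $f_*\mathcal{O}_Y$ splits off $\mathcal{O}_X$ as a direct summand (in characteristic zero, via the trace), so that the projection formula identifies $H^p(X, \cdot)$ with a summand of $H^p(Y, f^*\cdot)$, transporting the vanishing across the cover.

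The hard part will be the bookkeeping in the quotient and tensor-power steps: when one forms $\bigwedge^i \cE_1$ and $S_j(\cE_2)$ and combines them with $\Omega_X^k$, the exterior and symmetric powers do not interact simply with short exact sequences, so tracking how the rank parameters $r_1, r_2$ and the cohomological degree bound $p > r_1+r_2-i-1$ transform requires care. In particular, verifying that the stated bound is exactly the one preserved under each operation—rather than a weaker or stronger inequality—is the delicate point, and I expect this to rest on Manivel's filtration arguments (Schur functor decompositions and the Borel--Weil--Bott machinery) rather than on any single clean vanishing theorem. Since the full argument is carried out in~\cite{manivel}, I would present the reduction to the generators and the closure properties, and cite Manivel for the base case vanishing and the precise degree estimates, emphasizing that the twist by the ample $\sL$ is what converts nef (non-strict) positivity into the vanishing in the advertised range.
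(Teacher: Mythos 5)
The first thing to note is that the paper does not prove this proposition at all: the bracketed reference in the statement, \cite[p.~409]{manivel}, is the paper's entire justification, and the result is used downstream as a black box. So your final move --- citing \cite{manivel} for the base-case vanishing and the precise degree bound --- is in effect exactly what the paper does, and a reviewer cannot fault you for deferring there.

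What I would fault is the reduction scaffolding you wrap around that citation, which would not survive being made precise. Three concrete problems. First, the uniformly nef hypothesis is a condition on $T_X$, and the closure operations defining the category (quotients, extensions, direct sums, finite pullbacks) produce bundles that are in general not tangent bundles of any variety; so an ``induction over generators and closure operations'' cannot be an induction over varieties. To run it you would have to replace the statement about $\Omega_X^k$ by one about $\bigwedge^k(\F^*)$ for an arbitrary member $\F$ of the category, and then Serre duality --- which involves $\omega_X=\det(T_X^*)$, tied to the actual tangent bundle --- no longer interacts cleanly with the bundle being inducted on; this is precisely the delicate point, and your sketch passes over it. Second, your claim that stability under quotients ``follows from the long exact sequence by analyzing the kernel'' fails: for a quotient $\cQ$ of $\cE$, the long exact sequence controls $H^p$ of the $\cQ$-term by $H^{p+1}$ of the kernel term, and the kernel is not in the category, so no inductive hypothesis applies to it. Closure under quotients is part of the \emph{definition} of the category, not something the vanishing statement inherits by bookkeeping. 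Third, in the finite-cover step, the trace splitting transports vanishing for $f^*\big(\Omega_X^k\otimes\cdots\big)$, not for $\Omega_Y^k\otimes\cdots$; since $f^*\Omega_X^k\neq\Omega_Y^k$ unless $f$ is \'etale, the statement you obtain on $Y$ is not of the same shape as the one being proved, so it cannot be fed back into the induction as stated. None of this affects the paper, which treats the proposition as an external result; but presented as a proof outline, your sketch asserts as routine exactly the steps where the real difficulty lives.
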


\begin{proof}[Proof of Theorem~\ref{main_thm_general}]
Since $\F$ is locally free, the singular scheme $Z:=\mathrm{Sing}(\F)$
is the degeneracy scheme
of the dual map $\Omega_X^1 \to \F^*$.
Since $Z$ has pure dimension $k-1$, the result will follow from
Proposition~\ref{main_prop} once we prove that
$$
H^i\Big(X, \bigwedge^{k}\T_{X} \otimes \Omega_{X}^{k+i}  \otimes S_{i}(\F)\Big) = 0
$$
for every $i\in\{1,\dots, \dim(X)-k\}$.

By Serre duality,
$$
H^i\Big(X, \bigwedge^{k}\T_{X} \otimes \Omega_{X}^{k+i}  \otimes S_{i}(\F)\Big)
\cong H^{n-i}\Big(X,\Omega^{k}_{X}\otimes \bigwedge^{k+i}\T_{X}
\otimes S_{i}(\F^*)\otimes\omega_X \Big).
$$
Notice that
$$
S_{i}(\F^*)\otimes\omega_X\cong S_i(\F^*\otimes \sA^{-1})\otimes
(\sA^{\otimes i}\otimes \omega_X).
$$
We obtain the required vanishings by applying Proposition~\ref{prop:manivel_1} with
$\E_1= \T_X$, $\E_2=\F^*\otimes \sA^{-1}$, and $\sL=\sA^{\otimes i}\otimes \omega_X$.
\end{proof}

In order to prove Theorem~\ref{main_thm}, we will compute
several cohomology groups of special vector bundles on $\PP^n$.
We start by recalling Bott's formulae.

\begin{say}[Bott's formulae]\label{bott}
Let $n$ be a positive integer, $k$ and $p$ non-negative integers, and $s$ an
arbitrary integer. Then
$$
h^p(\PP^n,\Omega_{\PP^n}^k(s)) =
\begin{cases}
\binom{s+n-k}{s}\binom{s-1}{k} & \text{for } p=0, 0\le k\le n \text{ and } s>k,\\
1 & \text{for } s=0 \text{ and } 0\le k=p\le n,\\
\binom{-s+k}{-s}\binom{-s-1}{n-k} & \text{for } p=n, 0\le k\le n \text{ and } s<k-n,\\
0 & \text{otherwise.}
\end{cases}
$$
\end{say}

In addition to Proposition~\ref{prop:manivel_1}, we will need the following result.

\begin{prop}[{\cite[p.404]{manivel}}]\label{prop:manivel_2}
Let $\cE$ be a vector bundle on $\PP^n$, and $p_0$ an integer.
Suppose that
$$
H^p(\PP^n,\omega_{\PP^n}\otimes \cE(s)) = 0
$$
for every non-negative integer  $s$ and every $p\geq p_0$. Then
$$
H^p(\PP^n,\Omega_{\PP^n}^k\otimes \cE(s)) = 0
$$
for any non-negative integers $s$ and $k$, and
every $p\geq p_0$.
\end{prop}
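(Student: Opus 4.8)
The plan is to reduce the statement for a general $\Omega^k_{\PP^n}$ to the hypothesized vanishing at $k=n$, observing that $\omega_{\PP^n}=\Omega^n_{\PP^n}$, by a descending induction on $k$ that uses the exterior powers of the Euler sequence to pass from $\Omega^k_{\PP^n}$ to $\Omega^{k-1}_{\PP^n}$.

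First I would record the key exact sequences. Taking $k$-th exterior powers of the dual Euler sequence $0 \to \Omega^1_{\PP^n} \to \mathcal{O}_{\PP^n}(-1)^{\oplus(n+1)} \to \mathcal{O}_{\PP^n} \to 0$ and using that the quotient is a line bundle, one obtains for every $1\le k\le n$ a short exact sequence
$$
0 \to \Omega^k_{\PP^n} \to \mathcal{O}_{\PP^n}(-k)^{\oplus\binom{n+1}{k}} \to \Omega^{k-1}_{\PP^n} \to 0,
$$
since $\bigwedge^k\big(\mathcal{O}_{\PP^n}(-1)^{\oplus(n+1)}\big)\cong \mathcal{O}_{\PP^n}(-k)^{\oplus\binom{n+1}{k}}$. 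Tensoring with $\cE(s)$ yields
$$
0 \to \Omega^k_{\PP^n}\otimes\cE(s) \to \cE(s-k)^{\oplus\binom{n+1}{k}} \to \Omega^{k-1}_{\PP^n}\otimes\cE(s) \to 0.
$$

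The crucial point is that the middle sheaf is controlled by the hypothesis. Indeed, since $\omega_{\PP^n}=\mathcal{O}_{\PP^n}(-n-1)$, we have $\cE(s-k)\cong \omega_{\PP^n}\otimes\cE(s+n+1-k)$, and for $s\ge 0$ and $1\le k\le n$ the twist satisfies $s+n+1-k\ge 1$, hence is non-negative. The hypothesis therefore gives $H^p(\PP^n,\cE(s-k))=0$ for every $p\ge p_0$. With this in hand I would run the induction: the base case $k=n$ is exactly the hypothesis, as $\Omega^n_{\PP^n}=\omega_{\PP^n}$. Assuming $H^p(\PP^n,\Omega^k_{\PP^n}\otimes\cE(s))=0$ for all $s\ge 0$ and $p\ge p_0$, the long exact cohomology sequence attached to the displayed sequence has vanishing $H^p$ and $H^{p+1}$ of the middle term for $p\ge p_0$, so the connecting map is an isomorphism $H^p(\PP^n,\Omega^{k-1}_{\PP^n}\otimes\cE(s))\cong H^{p+1}(\PP^n,\Omega^k_{\PP^n}\otimes\cE(s))$; the right-hand side vanishes since $p+1\ge p_0$, giving the claim for $k-1$. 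Descending from $k=n$ to $k=1$ covers all $0\le k\le n$, and for $k>n$ the sheaf $\Omega^k_{\PP^n}$ vanishes.

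I do not expect a genuine obstacle here: the argument is a clean descending induction. The only step requiring care is the bookkeeping of twists—verifying that $\cE(s-k)$ always lies in the range $s+n+1-k\ge 0$ where the hypothesis applies. This is precisely where the constraint $k\le n$ (equivalently, that we only invoke the exterior-power sequences up to $\Omega^n_{\PP^n}$) enters, and it is what makes the base case coincide with the assumed vanishing.
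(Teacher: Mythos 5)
Your argument is correct, but there is nothing in the paper to compare it against: the paper does not prove Proposition~\ref{prop:manivel_2} at all, it quotes the statement directly from Manivel \cite[p.~404]{manivel}. So you have in effect supplied the proof that the paper delegates to a reference. Your route is the standard one and every step checks out: the wedge powers of the dual Euler sequence give, for $1\le k\le n$, the exact sequences $0 \to \Omega^k_{\PP^n} \to \mathcal{O}_{\PP^n}(-k)^{\oplus\binom{n+1}{k}} \to \Omega^{k-1}_{\PP^n} \to 0$; after tensoring with $\cE(s)$ the middle term is $\omega_{\PP^n}\otimes\cE(s+n+1-k)$ with twist $s+n+1-k\ge 1\ge 0$ whenever $s\ge 0$ and $k\le n$, so the hypothesis kills its cohomology in degrees $\ge p_0$ (and in degree $p+1\ge p_0$ as well, making the connecting map an isomorphism); the base case $k=n$ is literally the hypothesis since $\Omega^n_{\PP^n}=\omega_{\PP^n}$; and the descending induction then covers all $0\le k\le n$, with $\Omega^k_{\PP^n}=0$ for $k>n$. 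Incidentally, this is the same Euler-sequence d\'evissage that the paper itself employs (in a dualized form) in the proof of Lemma~\ref{lema}, so your argument is stylistically consistent with the techniques used elsewhere in the paper.
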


We are now ready to establish the desired vanishing.

\begin{prop} \label{prop:vanishing}
Let $\cE$ be a vector bundle of rank $k$ on $\PP^n$, and suppose that one of the following
conditions hold.
\begin{enumerate}
    \item $\cE^*(k-n)$ is ample, or
    \item $\cE$ decomposes as a direct sum of line bundles and $\cE^*(k-n+1)$ is ample.
\end{enumerate}
Then
$$
H^p\Big(\PP^n, \bigwedge^{k}\T_{\PP^n} \otimes \Omega_{\PP^n}^i  \otimes S_{i-k}(\cE)\Big) = 0
$$
for any positive integers $i$ and $p$ satisfying $k+1\leq i \leq n$ and $p<i-k+1$.
\end{prop}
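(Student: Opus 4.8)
The plan is to pass to the Serre-dual cohomology, where $\cE^*$ (which is positive under either hypothesis) appears, and then match the resulting sheaf with the shape of Manivel's vanishing in Proposition~\ref{prop:manivel_1}. Since $(\bigwedge^k\T_{\PP^n})^*\cong\Omega^k_{\PP^n}$ and $\omega_{\PP^n}\cong\Oc_{\PP^n}(-n-1)$, Serre duality identifies $H^p\big(\PP^n,\bigwedge^k\T_{\PP^n}\otimes\Omega^i_{\PP^n}\otimes S_{i-k}(\cE)\big)$ with the dual of
$$
H^{n-p}\Big(\PP^n,\ \Omega^k_{\PP^n}\otimes\bigwedge^i\T_{\PP^n}\otimes S_{i-k}(\cE^*)\otimes\Oc_{\PP^n}(-n-1)\Big).
$$
Under this identification the target range $1\le p\le i-k$ becomes $n-i+k\le n-p\le n-1$, so I would need the dual group to vanish for every $q$ with $n-i+k\le q\le n-1$.

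For the positivity input I would first record, from the Euler sequence $0\to\Oc_{\PP^n}(-1)\to\Oc_{\PP^n}^{\oplus(n+1)}\to\T_{\PP^n}(-1)\to 0$, that $\T_{\PP^n}(-1)$ is globally generated, hence nef, so $\bigwedge^i(\T_{\PP^n}(-1))$ is nef; writing $\bigwedge^i\T_{\PP^n}\cong\bigwedge^i(\T_{\PP^n}(-1))\otimes\Oc_{\PP^n}(i)$ peels off the maximal ample twist. In case (1), $\cE^*(k-n)$ is ample, hence nef, and $S_{i-k}(\cE^*)\cong S_{i-k}(\cE^*(k-n))\otimes\Oc_{\PP^n}\big((n-k)(i-k)\big)$. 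The dual sheaf then takes the form
$$
\Omega^k_{\PP^n}\otimes\bigwedge^i(\T_{\PP^n}(-1))\otimes S_{i-k}(\cE^*(k-n))\otimes\Oc_{\PP^n}\big(i+(n-k)(i-k)-n-1\big).
$$
Applying Proposition~\ref{prop:manivel_1} with $\cE_1=\T_{\PP^n}(-1)$ (rank $n$), $\cE_2=\cE^*(k-n)$ (rank $k$), exterior index $i$ and symmetric index $i-k$, its vanishing range $q>n+k-i-1$, i.e.\ $q\ge n-i+k$, matches exactly what is required. The only hypothesis left to verify is that the residual line bundle be ample, that is $i+(n-k)(i-k)>n+1$, which holds for every $i\ge k+2$.

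In case (2) I would not use Proposition~\ref{prop:manivel_1} but rather the splitting. Writing $\cE\cong\bigoplus_j\Oc_{\PP^n}(a_j)$, the hypothesis that $\cE^*(k-n+1)$ be ample forces $a_j\le k-n$, so every line-bundle summand $\Oc_{\PP^n}(d)$ of $S_{i-k}(\cE)$ has $d\le(i-k)(k-n)$, i.e.\ $-d\ge(i-k)(n-k)$. For each such summand Serre duality turns the problem into the vanishing of $H^q\big(\Omega^k_{\PP^n}\otimes\Omega^{n-i}_{\PP^n}(-d)\big)$ for $n-i+k\le q\le n-1$; stripping the factor $\Omega^k_{\PP^n}$ by Manivel's Proposition~\ref{prop:manivel_2} reduces this to the vanishing of $H^q\big(\Omega^{n-i}_{\PP^n}(t)\big)$ for all $t\ge -d-n-1$ and all $q\ge n-i+k$, which I would read off from Bott's formulae~\ref{bott}: since $k\ge 1$, the degrees $q=0$ and $q=n-i$ lie below $n-i+k$, so the only possible nonzero term sits in top degree $q=n$, and the inequality $-d\ge(i-k)(n-k)\ge n+1-i$ kills it. This argument is uniform in $i$ and in particular covers the boundary value $i=k+1$.

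The main obstacle is the boundary case $i=k+1$ of hypothesis (1), where the residual twist $\Oc_{\PP^n}\big(i+(n-k)(i-k)-n-1\big)$ collapses to $\Oc_{\PP^n}$ and Proposition~\ref{prop:manivel_1} no longer applies. Here, since $p\ge 1$ forces $p=1$, only the single group $H^{n-1}$ must be shown to vanish, and the work must be done by the \emph{ampleness}, not merely the nefness, of $\cE^*(k-n)$. I would resolve this either by invoking Manivel's vanishing theorem in the sharper form valid when the symmetric factor is built from an ample bundle, so that $\cE^*(k-n)$ itself plays the role of the ample twist $\sL$, or, absent a directly citable statement, by a concrete computation: resolve $\bigwedge^{k+1}\T_{\PP^n}$ through the Euler sequence and combine Bott's formulae with a Le Potier--type vanishing applied to the ample bundle $\cE^*(k-n)$.
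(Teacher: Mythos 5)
Your overall route---Serre duality, then Manivel's Proposition~\ref{prop:manivel_1} after peeling off a positive line-bundle twist---is the paper's route, and your case (2) is in substance complete and correct: writing each line-bundle summand of $S_{i-k}(\cE)$ as $\cO_{\PP^n}(d)$ with $-d\geq(i-k)(n-k)$, stripping the factor $\Omega^k_{\PP^n}$ with Proposition~\ref{prop:manivel_2}, and concluding by Bott is exactly what the paper does at $i=k+1$ (the paper handles $i>k+1$ by Proposition~\ref{prop:manivel_1} instead, so your treatment of this case is, if anything, more uniform), and the key inequality $(i-k)(n-k)\geq n+1-i$ that you verify is the right one.

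The genuine gap is the one you flagged yourself: case (1) at the boundary $i=k+1$, and neither of your proposed repairs closes it. The paper's toolkit contains no ``sharper form'' of Proposition~\ref{prop:manivel_1} in which an ample $\cE_2$ can replace the ample line bundle $\sL$; and the Le Potier route does not reach the group in question, which after rewriting $\bigwedge^{k+1}\T_{\PP^n}\otimes\cO_{\PP^n}(-n-1)\cong\Omega^{n-k-1}_{\PP^n}$ becomes $H^{n-1}\big(\PP^n,\Omega^{k}_{\PP^n}\otimes\Omega^{n-k-1}_{\PP^n}\otimes\cE^*\big)$: it carries two cotangent factors and sits in degree $q=n-1$, whereas Le Potier's theorem requires a single exterior power $\Omega^p$ and the range $p+q\geq n+\mathrm{rank}$, which fails here even ignoring the second factor. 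The paper's actual fix is a one-line strengthening of the positivity input that you left unused: since $\cE^*(k-n)$ is ample, the \emph{further} twist $\cE^*(k-n-1)$ is still nef on $\PP^n$. Taking $\cE_2=\cE^*(k-n-1)$ (instead of your $\cE^*(k-n)$) in Proposition~\ref{prop:manivel_1} changes the residual twist to $\cO_{\PP^n}(r)$ with $r=(n-k+1)(i-k)+i-n-1$, and now $r\geq 1$ for \emph{every} $i\geq k+1$, with $r=1$ exactly at the boundary $i=k+1$. Thus a single application of Proposition~\ref{prop:manivel_1} covers all of case (1), and no separate boundary argument is needed; this is precisely how the paper ``spends'' ampleness rather than mere nefness, which you correctly sensed was necessary at $i=k+1$ but did not carry out.
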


\begin{proof}
We may assume that $k\leq n$.
By Serre duality,
$$
H^{p}\Big(\PP^n, \bigwedge^k\T_{\PP^n}\otimes\Omega^{i}_{\PP^n}
\otimes S_{i-k}(\E) \Big)\cong H^{n-p}\Big(\PP^n,\Omega^{k}_{\PP^n}\otimes \bigwedge^i\T_{\PP^n}
\otimes S_{i-k}(\E^*)(-n-1) \Big).
$$

Suppose first that $\cE^*(k-n)$ is ample, and thus $\E^*(k-n-1)$ is nef.
We write
$$
\bigwedge^i\T_{\PP^n}
\otimes S_{i-k}(\E^*)(-n-1)\cong \bigwedge^i\big(\T_{\PP^n}(-1)\big)
\otimes S_{i-k}\big(\E^*(k-n-1)\big)\otimes \mathcal{O}_{\PP^n}(r),
$$
where $r=(n-k+1)(i-k)+i-n-1$.
Since $i\geq k+1$, we get that $r\geq 1$, i.e., $\mathcal{O}_{\PP^n}(r)$ is an ample line bundle.
By applying Proposition~\ref{prop:manivel_1} with
$\E_1= \T_{\PP^n}(-1)$, $\E_2=\E^*(k-n-1)$ and $\sL=\mathcal{O}_{\PP^n}(r)$, we get:
$$
H^{n-p}\Big(\PP^n,\Omega^{k}_{\PP^n}\otimes \bigwedge^i\T_{\PP^n}
\otimes S_{i-k}(\E^*)(-n-1) \Big) = 0
$$
for any positive integers $i$ and $p$ satisfying $k+1\leq i \leq n$ and $p<i-k+1$.

Suppose now that $\cE\cong \bigoplus_{j=1}^{k}\mathcal{O}_{\PP^n}(n-k+a_j)$, with $a_j\geq 0$.
If $i>k+1$, then we write
$$
\bigwedge^i\T_{\PP^n}
\otimes S_{i-k}(\E^*)(-n-1)\cong \bigwedge^i\big(\T_{\PP^n}(-1)\big)
\otimes S_{i-k}\big(\E^*(k-n)\big)\otimes \mathcal{O}_{\PP^n}(r'),
$$
with $r'=(n-k)(i-k)+i-n-1\geq 1$.
The required vanishing then follows from  Proposition~\ref{prop:manivel_1} as above.
From now on we assume that $i=k+1$ and $p=1$.
Since cohomology commutes with direct sum, we must show that
\begin{equation}\label{vanishing}
H^{n-1}\Big(\PP^n,\Omega^{k}_{\PP^n}\otimes \bigwedge^{k+1}\T_{\PP^n}
(a-k-1)\Big) = 0
\end{equation}
for every $a\geq 0$.
By Bott's formulae we have
$$
H^{n-1}\Big(\PP^n,\omega_{\PP^n}\otimes \bigwedge^{k+1}\T_{\PP^n}
(a-k-1+s)\Big)
\cong
H^{n-1}\Big(\PP^n,\Omega^{n-k-1}_{\PP^n}
(a-k-1+s)\Big)
= 0, \ \text{and}
$$
$$
H^{n}\Big(\PP^n,\omega_{\PP^n}\otimes \bigwedge^{k+1}\T_{\PP^n}
(a-k-1+s)\Big)
\cong
H^{n}\Big(\PP^n,\Omega^{n-k-1}_{\PP^n}
(a-k-1+s)\Big)
= 0
$$
for every non-negative integer  $s$.
Proposition~\ref{prop:manivel_2} then implies \eqref{vanishing}.
\end{proof}

It is well known that the tangent bundle of $\PP^n$ is simple, i.e.,
$H^0(\PP^n,\T_{\PP^n}\otimes\Omega^{1}_{\PP^n})\cong \CC$ (see for instance \cite[Lemma 4.1.2]{Okonek}).
The next lemma shows that the same is true for exterior powers of $\T_{\PP^n}$,
providing the last ingredient for the proof of Theorem~\ref{main_thm}

\begin{lema}\label{lema}
Let $n$ be a positive integer, and $k\leq n$ a non-negative integer. Then
$H^0(\PP^n,\bigwedge^k\T_{\PP^n}\otimes\Omega^{k}_{\PP^n})\cong \CC.$
\end{lema}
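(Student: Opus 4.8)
The plan is to prove the stronger statement that $\bigwedge^k\T_{\PP^n}$ is \emph{simple}. Since $\Omega^k_{\PP^n}=\bigwedge^k(\Omega^1_{\PP^n})=(\bigwedge^k\T_{\PP^n})^*$, the group in question is the space of global endomorphisms
$H^0(\PP^n,\bigwedge^k\T_{\PP^n}\otimes\Omega^k_{\PP^n})=\End(\bigwedge^k\T_{\PP^n})$,
which always contains the line of scalars $\CC\cdot\mathrm{id}$; so it suffices to show this space is at most one-dimensional. I would argue by induction on $k$: the base case $k=0$ is trivial since $\bigwedge^0\T_{\PP^n}=\mathcal{O}_{\PP^n}$, and the case $k=1$ recovers the simplicity of $\T_{\PP^n}$ recalled just above. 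The inductive step will produce a chain of isomorphisms reducing $H^0(\bigwedge^k\T_{\PP^n}\otimes\Omega^k_{\PP^n})$ to $H^0(\bigwedge^{k-1}\T_{\PP^n}\otimes\Omega^{k-1}_{\PP^n})$.

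The main tool is the pair of generalized Euler sequences obtained by applying $\bigwedge^k$ to the cotangent Euler sequence $0\to\Omega^1_{\PP^n}\to\mathcal{O}_{\PP^n}(-1)^{\oplus(n+1)}\to\mathcal{O}_{\PP^n}\to0$, namely
\[
0\to\Omega^k_{\PP^n}\to\mathcal{O}_{\PP^n}(-k)^{\oplus\binom{n+1}{k}}\to\Omega^{k-1}_{\PP^n}\to0,
\]
together with its dual
\[
0\to\textstyle\bigwedge^{k-1}\T_{\PP^n}\to\mathcal{O}_{\PP^n}(k)^{\oplus\binom{n+1}{k}}\to\bigwedge^k\T_{\PP^n}\to0.
\]
For the inductive step I would first tensor the second sequence by $\Omega^k_{\PP^n}$ and pass to cohomology: the middle term becomes $\Omega^k_{\PP^n}(k)^{\oplus\binom{n+1}{k}}$, so the two bounding contributions are governed by $H^0(\Omega^k_{\PP^n}(k))$ and $H^1(\Omega^k_{\PP^n}(k))$, both of which vanish for $1\le k\le n$ by Bott's formulae~\ref{bott}. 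The connecting map then gives $H^0(\bigwedge^k\T_{\PP^n}\otimes\Omega^k_{\PP^n})\cong H^1(\bigwedge^{k-1}\T_{\PP^n}\otimes\Omega^k_{\PP^n})$. Next I would tensor the first sequence by $\bigwedge^{k-1}\T_{\PP^n}$. Using $\bigwedge^{k-1}\T_{\PP^n}\cong\Omega^{n-k+1}_{\PP^n}(n+1)$, the relevant middle term is $\bigwedge^{k-1}\T_{\PP^n}(-k)\cong\Omega^{n-k+1}_{\PP^n}(n-k+1)$, whose $H^0$ and $H^1$ again vanish by Bott. Hence the connecting map yields $H^1(\bigwedge^{k-1}\T_{\PP^n}\otimes\Omega^k_{\PP^n})\cong H^0(\bigwedge^{k-1}\T_{\PP^n}\otimes\Omega^{k-1}_{\PP^n})$, completing the chain.

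A convenient feature of this route is that both vanishing inputs reduce to the single computation $H^0(\Omega^m_{\PP^n}(m))=H^1(\Omega^m_{\PP^n}(m))=0$ for $1\le m\le n$: the $H^0$ is ruled out because Bott requires a twist strictly larger than the form degree, and the only candidate for nonvanishing $H^1$ (the middle case of Bott) requires twist $0$. I expect the main obstacle to be purely organizational, namely choosing the two Euler sequences so that the error terms land precisely on these vanishing groups, and keeping the degree shift in $\bigwedge^{k-1}\T_{\PP^n}\cong\Omega^{n-k+1}_{\PP^n}(n+1)$ straight. Once the chain of isomorphisms is established, the induction immediately gives $\dim H^0(\PP^n,\bigwedge^k\T_{\PP^n}\otimes\Omega^k_{\PP^n})=\dim H^0(\PP^n,\mathcal{O}_{\PP^n})=1$, and since the scalars already account for one dimension we conclude $H^0(\PP^n,\bigwedge^k\T_{\PP^n}\otimes\Omega^k_{\PP^n})\cong\CC$.
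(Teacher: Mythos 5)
Your proof is correct, and it takes a route that is close to, but organized differently from, the paper's. Both arguments rest on the same two pillars, exterior powers of the Euler sequence and Bott's formulae, and your first isomorphism $H^0(\PP^n,\bigwedge^k\T_{\PP^n}\otimes\Omega^k_{\PP^n})\cong H^1(\PP^n,\bigwedge^{k-1}\T_{\PP^n}\otimes\Omega^k_{\PP^n})$ comes from literally the same short exact sequence as the paper's first step (the paper derives it from the wedge of the tangent Euler sequence, you from the dual of the wedge of the cotangent one). From there the paths diverge: the paper holds the factor $\Omega^k_{\PP^n}$ fixed and keeps climbing in cohomological degree through the sequences $0\to\bigwedge^{k-i-1}\T_{\PP^n}\otimes\Omega^k_{\PP^n}\to\Omega^k_{\PP^n}(k-i)^{\oplus\binom{n+1}{k-i}}\to\bigwedge^{k-i}\T_{\PP^n}\otimes\Omega^k_{\PP^n}\to 0$, for $0\le i\le k-1$, obtaining $H^0(\bigwedge^k\T_{\PP^n}\otimes\Omega^k_{\PP^n})\cong H^1(\bigwedge^{k-1}\T_{\PP^n}\otimes\Omega^k_{\PP^n})\cong\cdots\cong H^k(\PP^n,\Omega^k_{\PP^n})\cong\CC$, so its chain terminates in the nontrivial Hodge-type fact $h^k(\PP^n,\Omega^k_{\PP^n})=1$. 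You instead come back down to $H^0$ at each stage via a second Euler-type sequence and induct on $k$, terminating in the trivial $H^0(\PP^n,\mathcal{O}_{\PP^n})\cong\CC$; the price is twice as many exact sequences, and the gain is that your only cohomological inputs are vanishings, all of the uniform shape $H^0(\Omega^m_{\PP^n}(m))=H^1(\Omega^m_{\PP^n}(m))=0$ for $1\le m\le n$, which is a pleasant economy of ingredients. Two small remarks: your justification of $H^1(\Omega^m_{\PP^n}(m))=0$ should also dispose of Bott's $p=n$ case (relevant only when $n=1$), which indeed vanishes since it requires twist $<m-n$; and your inductive step actually already works at $k=1$, so the simplicity of $\T_{\PP^n}$ need not be invoked as a separate base case.
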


\begin{proof}
We start with the Euler sequence:
\begin{equation}\label{eq:euler}
0 \to \mathcal{O}_{\PP^n}(-1)\to
\mathcal{O}_{\PP^n}^{\oplus n+1}  \to T_{\PP^n}(-1)\to 0.
\end{equation}
By taking the $(k-i)$-th exterior power and twisting by $\Omega^{k}_{\PP^n}(k-i)$, $0\leq i\leq k-1$, we obtain
the exact sequence
$$
0 \to \bigwedge^{k-i-1}\T_{\PP^n}\otimes\Omega^{k}_{\PP^n}\to
\Omega^{k}_{\PP^n}(k-i)^{\oplus\binom{n+1}{k-i}}  \to\bigwedge^{k-i}\T_{\PP^n}\otimes\Omega^{k}_{\PP^n}\to 0.
$$
By Bott's formula, $H^p\big(\PP^n, \Omega^{k}_{\PP^n}(k-i)\big)=0$ for every $p\geq 0$, while
$H^k\big(\PP^n, \Omega^{k}_{\PP^n}\big)\cong \CC$.
Hence, by taking cohomology of the above sequence we get
\begin{center}
$
H^0(\PP^n,\bigwedge^k\T_{\PP^n}\otimes\Omega^{k}_{\PP^n})\simeq H^1(\PP^n,\bigwedge^{k-1}\T_{\PP^n}\otimes\Omega^{k}_{\PP^n})\simeq
$
$
H^2(\PP^n,\bigwedge^{k-2}\T_{\PP^n}\otimes\Omega^{k}_{\PP^n})\simeq \cdots \simeq H^k(\PP^n,\Omega^{k}_{\PP^n})\simeq \CC.$
\end{center}
\end{proof}

\begin{proof}[Proof of Theorem~\ref{main_thm}]
Let $\eta : \Omega^k_{\PP^n}\to (\bigwedge^k\F)^*$ be a Pfaff field of rank $k$ on $\PP^n$, and suppose that
$\mathrm{Sing}(\F)\subset \mathrm{Sing}(\eta)$.
By Proposition~\ref{prop:vanishing}, we have the required vanishing to apply
Theorem~\ref{main_prop} with $\E=\Omega_{\PP^n}^1$ and $\G=\F^*$.
So we conclude that there is  an endomorphism $\alpha$ of $\Omega^k_{\PP^n}$
such that $\eta = \eta_{\F}\circ \alpha$.
On the other hand, by Lemma~\ref{lema},
$\End(\Omega^{k}_{\PP^n})\cong H^0(\PP^n,\bigwedge^k\T_{\PP^n}\otimes\Omega^{k}_{\PP^n})\cong \CC.$
Hence $\eta=\lambda \cdot \eta_{\F}$ for some $\lambda\in \CC^*$.
\end{proof}

\begin{proof}[Proof of Theorem~\ref{thm:generic_codim1_dist}]
Let $r\in \ZZ$ be such that $\mathcal{O}_{\PP^n}(r)= \det(\F^*)\otimes \omega_{\PP^n}^*$.
The condition $\deg(\F^*)>0$ implies that $r>n+1$.
The distribution $\F$ induces a morphism
$\T_{\PP^n}\rightarrow \mathcal{O}_{\PP^n}(r)$, which
corresponds to a nonzero section $\omega_{\F}\in H^0\big(X,\Omega^1_{\PP^n}(r))$.

By twisting the dual of the Euler sequence \eqref{eq:euler} by
$\bigwedge^{i+1}\T_{\PP^n}(-ir+1)$, $1\leq i\leq n-1$, we obtain the
exact sequence
$$
0 \to \Omega^{1}_{\PP^n} \otimes \bigwedge^{i+1}\T_{\PP^n}(-ir)\to
\bigwedge^{i+1}\T_{\PP^n}(-ir-1)^{\oplus(n+1)} \to
\bigwedge^{i+1}\T_{\PP^n}(-ir)\to 0.
$$
By taking cohomology of the above sequence we get
$$
\cdots  \rightarrow H^{i-1}(\PP^n,
\bigwedge^{i+1}\T_{\PP^n}(-ir))\rightarrow
H^{i}(\PP^n,\Omega^{1}_{\PP^n} \otimes
\bigwedge^{i+1}\T_{\PP^n}(-ir)) \rightarrow
$$
$$
 \rightarrow
H^{i}(\PP^n,\bigwedge^{i+1}\T_{\PP^n}(-ir-1)^{\oplus(n+1)})\rightarrow \cdots
$$
By Bott's formulae, for $1\leq i\leq n-1$,
$$
H^{i-1}(\PP^n, \bigwedge^{i+1}\T_{\PP^n}(-ir))=
H^{i}(\PP^n,\bigwedge^{i+1}\T_{\PP^n}(-ir-1)^{\oplus(n+1)})=0.
$$
Thus $H^{i}(\PP^n,\Omega^{1}_{\PP^n} \otimes
\bigwedge^{i+1}\T_{\PP^n}(-ir))=0$ for $1\leq i\leq n-1$.

Let $\omega\in H^0\big(X,\Omega^1_{\PP^n}(r))$ be such that $\omega|_{Sing(\F)}=0$.
It follows from Theorem \ref{main_prop} and
Lemma \ref{lema} that there is
$\alpha\in \mathbb{C}$ such that $\omega = \alpha \cdot \omega_{\F}$.
\end{proof}

\end{document}